    \def\MR#1{}
\tikzstyle{ghostfill} = [fill=white]
         \tikzstyle{ghostdraw} = [draw=black!50]
\tikzstyle arrowstyle=[scale=1]
\tikzstyle directed=[postaction={decorate,decoration={markings, mark=at position .65 with {\arrow[arrowstyle]{stealth}}}}]
\tikzstyle reverse directed=[postaction={decorate,decoration={markings, mark=at position .65 with {\arrowreversed[arrowstyle]{stealth};}}}]
\def\Z{{\mathbb Z}}
\def\p{{\mathfrak{p}}}
\def\lcm{\operatorname{lcm}}
\newcommand{\reg}{\operatorname{reg}_R}
\def\p{{\mathfrak p}}
\theoremstyle{plain}
\newtheorem{Theorem}{Theorem}[section]
\newtheorem{Lemma}[Theorem]{Lemma}
\newtheorem{Corollary}[Theorem]{Corollary}
\newtheorem{Proposition}[Theorem]{Proposition}
\theoremstyle{definition}
\newtheorem{Assumptions and Discussion}[Theorem]{Assumptions and Discussion}
\newtheorem{Example}[Theorem]{Example}
\newtheorem{Definition}[Theorem]{Definition}
\theoremstyle{remark}
\newtheorem{observation}[Theorem]{Observation}
\newtheorem{remark}[Theorem]{Remark}
\newtheorem{Remark}[Theorem]{Remark}
\newtheorem*{acknowledgement*}{Acknowledgement}
\newtheorem*{ex*}{Example}
\newtheorem*{exer*}{Exercise}
\newtheorem*{rem*}{Remark}
\newtheorem*{prob*}{Problem}
\newtheorem*{prop*}{Proposition}
\newcommand{\pd}{\mathop{\mathrm{pd}}\nolimits}
\def\ker{\operatorname{ker}}
\def\lcm{\operatorname{lcm}}
\def\Sta{\operatorname{Sta}} 
\def\StaG{\operatorname{StaG}} 
\def\supp{\operatorname{supp}}
\def\KK{{\mathbb K}}
\def\NN{{\mathbb N}}
\def\QQ{{\mathbb Q}}
\def\ZZ{{\mathbb Z}}
\def\calF{\mathcal{F}}
\def\bda{{\bm a}}
\def\bde{{\bm e}}
\def\bdK{{\bm K}}
\def\bdN{{\bm N}}
\def\bdx{{\bm x}}
\def\bdalpha{{\bm \alpha}}
\def\bdbeta{{\bm \beta}}
\def\deg{\operatorname{deg}}
\def\upT{\textup{T}}
\tikzstyle{ghostfill} = [fill=white]
\tikzstyle{ghostdraw} = [draw=black!50]
\tikzstyle arrowstyle=[scale=1]
\tikzstyle directed=[postaction={decorate,decoration={markings, mark=at position .5 with {\arrow[arrowstyle]{stealth}}}}]
\tikzstyle reverse directed=[postaction={decorate,decoration={markings, mark=at position .5 with {\arrowreversed[arrowstyle]{stealth};}}}]
\begin{document}
\baselineskip=16pt

\title[Gen Newton Dual]
{\Large\bf Generalized Newton Complementary Duals of Monomial Ideals}

\author[Katie Ansaldi, Kuei-Nuan Lin and Yi-Huang Shen]
{Katie Ansaldi,  Kuei-Nuan Lin and Yi-Huang Shen}

\thanks{AMS 2010 {\em Mathematics Subject Classification}.
Primary 13D02; Secondary 05E40.}

\thanks{Keyword: Free Resolution,  Special Fiber, Monomial Ideal,  Ferrers Graph, Stable Ideal
}

\address{
 {Department of Mathematics and Computer Science, Wabash college, Crawfordsville, IN}
} \email{ansaldik@wabash.edu}

\address{
Department of Mathematics, Penn State Greater Allegheny,  McKeesport, PA
}
\email{kul20@psu.edu}

\address{Wu Wen-Tsun Key Laboratory of Mathematics of CAS and School of Mathematical Sciences, University of Science and Technology of China, Hefei, Anhui, 230026, P.R.~China}
\email{yhshen@ustc.edu.cn}


\begin{abstract}
    Given a monomial ideal in a polynomial ring over a field, we define the generalized Newton complementary dual of the given ideal. We show good properties of such duals including linear quotients and isomorphisms between the special fiber rings. We construct the cellular free resolutions of duals of strongly stable ideals generated in the same degree. When the base ideal is generated in degree two, we provide an explicit description of cellular free resolution of the dual of a compatible generalized stable ideal.
\end{abstract}

\maketitle

\section{Introduction}

Given a polynomial ring $R=\KK[x_1,\ldots,x_n]$ over a field $\KK$ and a graded ideal $I$ in $R$, one would like to understand various algebraic properties of the ideal. For instance, the Castelnuovo-Mumford regularity, the projective dimension, and the Cohen-Macaulayness of $R/I$, are of great importance. Finding the minimal free resolution of the ideal is the key to those properties. This has been an active area among commutative algebraists and algebraic geometers. Another central topic in algebraic geometry is the blow up algebras defined by the given ideal. 

When $I$ is a monomial ideal, one can associate $I$ with combinatorial objects such as (hyper)graphs, and use combinatorial methods to recover algebraic properties; see, for example, the surveys \cite{H} and \cite{MV}. However, describing the precise minimal free resolution of a squarefree monomial ideal is not easy; see, for instance, \cite{AHH},  \cite{HV} and \cite{Ho}. There are even fewer results on finding the minimal free resolution for non-squarefree monomial ideals. The first non-trivial class to consider is that of stable ideals, studied by Eliahou and Kervaire in \cite{EK}. When $I$ is a monomial ideal generated in the same degree, its special fiber ring is the associated toric ring. Hence understanding the defining equations of the special fiber ring of the ideal is very important. Villarreal \cite{V} found the explicit description of the defining equations of the special fiber ring of edge ideals, i.e., squarefree monomial ideal generated in degree two. There are some other work on this subject, but they are almost always centered around squarefree monomial ideals. 

The motivation of this work comes from the paper \cite{CN1} of Corso and Nagel, where they studied the specialization of generalized Ferrers graphs (see Definitions \ref{Spec} and \ref{genferdef}). They showed that every strongly stable ideal of degree two can be obtained via a specialization. The authors later explicitly described the minimal free resolutions of Ferrers ideals and the defining equations of the special fiber ring in \cite{CN2}. For this purpose, they used cellular resolutions as introduced by Bayer and Sturmfels in \cite{BS}. This construction provides a characteristic-free context. Note that the special fiber ring is a determinantal ring in this case. 

Meanwhile,  the Newton complementary duals of monomial ideals were first introduced by Costa and Simis in \cite{CS}. There, the dual operation was applied to study the rational maps between the base ideals and the dual ideals. In this work, we will extend
the dual operation to get the generalized Newton complementary duals of monomial ideals, as introduced in \Cref{dual}. The generalized (Newton complementary) dual operation is indeed a dual operation, since the double dual will bring back the base ideal to itself (\Cref{doubleDual}).  Properties of generalized Newton complementary duals were investigated in this work.

In Section 3, we establish an isomorphism of special fiber rings between the base ideal and the generalized dual ideal (\Cref{FiberIso}), generalizing the corresponding result of Costa and Simis. In other words, we will prove that the base ideal defines a birational map if and only if its generalized dual ideal defines a birational map.  We then focus on the generalized duals of monomial ideals that are related to classical Ferrers graphs. The ideals that we consider are duals of specializations of generalized Ferrers ideals. As a corollary of \Cref{FiberIso} and the work of Corso, Nagel, Petrovi\'{c}, and Yuen in \cite{CNPY}, we describe the special fiber rings of the generalized duals of specializations of generalized Ferrers ideals. In other words, we describe the toric rings associated to the generalized duals of such ideals (\Cref{specfibershape}). Those toric rings are Koszul normal Cohen-Macaulay domains. In particular, we establish a new class of Koszul ideals.

In Section 4, we focus on the basic properties of duals of monomial ideals generated in the same degree. Whence, the generalized duals are also generated in the same degree. This class of monomial ideals has the nice property that it is closed under the ideal products. Moreover, when the base ideals are stable of degree two or strongly stable, the generalized duals have linear quotients (\Cref{LinQuo}); in particular, their regularities coincide with the common degrees of the minimal monomial generators. We further notice that Newton complementary duals of an ideal defined by a bipartite graph is the Alexander dual of the edge ideal of the complement of the base bipartite graph (\Cref{AlexDual}). However, this property does not hold when the base ideal is not coming from a bipartite graph. 
The reason for this, as pointed out later in the work of Budd and Van Tuyl (\cite[Discussion 3.8]{BV}), is the close relation between the Alexander dual and the complementary dual of squarefree monomial ideals, which can be best seen from the point of view of simplicial complexes; see \Cref{not-agree}.

In Section 5, we find a cellular complex which supports the minimal free resolution for the generalized duals of strongly stable ideals generated in degree $d$ (\Cref{Borel}). Consequently, we can easily recover the Betti numbers and projective dimensions of such ideals (\Cref{Betti}). The construction involved is inspired by the works of Corso and Nagel \cite{CN1} and \cite{CN2} as well as the work of Mermin \cite{Mer}. To be more precise, we use the technique of iterated mapping cones for the proof. This provides a geometric description of the free resolution and hence is characteristic-free. One also notices that, in some sense, our results provide the dual version of the work of Dochtermann and Engstr\"{o}m in \cite{DE}  or Nagel and Reiner in  \cite{NR}, where they found cellular resolutions of the edge ideals of cointerval hypergraphs or squarefree strongly stable ideals generated in a fixed degree. It is worth pointing out that the generalized dual ideals are usually non-squarefree, whereas they focused on squarefree cases. 

In the last section, inspired by the work of Nagel and Reiner in \cite{NR}, we consider a type of monomial ideals of degree 2 that generalizes the concept of stable ideal. When the base ideal is compatible in the sense of \Cref{compat}, we can describe explicitly a cellular free resolution of the generalized dual. The construction uses the quasi-Borel move which is similar to the Borel move introduced in \cite{FMS}. And the proof uses the properties of linear quotients and iterated mapping cones (\Cref{shifted-stable-linear-quot} and \Cref{compatible-stable}). In the special case when the base ideal is stable, one has a very neat formula for the Betti numbers of the generalized dual ideal (\Cref{bettiStable}). This construction offers a quick and computationally efficient way to obtain the minimal free resolution from a graphical investigation.

\section{Preliminary}

Let $R = \KK[x_1,\ldots,x_n]$ be a polynomial ring over a field $\KK$.  We give $R$ a standard graded structure, where all variables have degree one.  We write $R_i$ for the $\KK$-vector space of homogeneous degree $i$ forms in $R$ so that $R = \bigoplus_{i \ge 0} R_i$.  We use the notation $R(-d)$ to denote a rank-one free module with the generator in degree $d$ so that $R(-d)_i = R_{i-d}$.  

Analogously, the ring $R$ is endowed with a multigraded ($\NN^n$-graded) structure. Whence, if $\bda\in\NN^n$, following \cite[Section 26]{P}, we will usually say ``multidegree $\bdx^{\bda}$'' instead of ``$\NN^n$-degree $\bda$''. Meanwhile, $R(\bdx^{\bda})=R(-\bda)$ means the free $R$-module with one generator in the multidegree $\bdx^{\bda}$.

Let $M$ be a finitely generated graded $R$-module.  We can compute the minimal graded free resolution of $M$: 
\[
0 \to \bigoplus_{j} R(-j)^{\beta_{pj}(M)} \to \cdots\to \bigoplus_{j} R(-j)^{\beta_{2j}(M)} \to \bigoplus_{j} R(-j)^{\beta_{1j}(M)}  \to \bigoplus_{j} R(-j)^{\beta_{0j}(M)}\to 0.
\]
The minimal graded free resolution of $M$ is unique up to isomorphism.  Hence, the numbers $\beta_{ij}(M)$, called the \textit{graded Betti numbers} of $M$, are invariants of $M$.  Two coarser invariants measuring the complexity of this resolution are the \textit{projective dimension} of $M$, denoted by $\pd_R(M)$, and the \textit{Castelnuovo-Mumford regularity} of $M$, denoted by $\reg(M)$.  They are defined as
\[
\pd_R(M) = \max\Set{i\mid\beta_{ij}(M) \neq 0 }
\]
and
\[ 
\reg(M) = \max\Set{j - i\mid\beta_{ij}(M) \neq 0},
\]
respectively.

Now, we introduce the generalized Newton complementary dual of a given monomial ideal $I$ in the ring $R$. The Newton complementary dual was first introduced by Costa and Simis in \cite{CS}. Here, $[n]$ with $n\in\NN$ stands for the set $\Set{1,2,\dots,n}$.

\begin{Definition}
    \label{dual}
    Let $G(I)=\{f_1, \ldots, f_\nu\}$ be the minimal monomial generating set of $I$. Let $\bda=(\bda(1),\bda(2),\dots,\bda(n))\in \NN^n$. If for each $f_i=\bdx^{\bdalpha_i}:=\prod_k x_k^{\bdalpha_i(k)}\in G(I)$ and $j\in [n]$, one has $\bdalpha_i(j)\le \bda(j)$, we say $I$ is \emph{$\bda$-determined}. The \emph{generalized Newton complementary dual} of $I$ determined by $\bda$ is the monomial ideal $\widehat{I}^{[\bda]}$ with $G(\widehat{I}^{[\bda]})=\Set{\widehat{f_i}:=\bdx^\bda/f_i|f_i\in G(I)}$. We simply call $\widehat{I}^{[\bda]}$ the $\bda$-dual of $I$. When the vector $\bda$ is clear from the context, we also call $\widehat{I}^{[\bda]}$ the generalized dual of $I$.
\end{Definition}

We illustrate the concept of the generalized dual with a simple example. 

\begin{Example}
    Consider the ideal $I =(x^3, x^2y^2, y^4) \subseteq \KK[x,y]$. Let $\bda=(5,6)$. The generalized Newton complementary dual of $I$ determined by $\bda$ is $\widehat{I}^{[\bda]}$ with $G(\widehat{I}^{[\bda]})=\{x^2y^6,x^3y^4,x^5y^2\}$.
\end{Example}

\begin{Remark}
    In \Cref{dual}, when $\bda(j)=\max_i\{\bdalpha_i(j)\}$ for each $j$, the ideal $\widehat{I}^{[\bda]}$ is exactly the Newton complementary dual of $I$ defined in \cite{CS}. We will write $\widehat{I}^{[\bdN]}$ for the Newton complementary dual of $I$.
\end{Remark}
    
\begin{Remark}
    \label{doubleDual}
    Two easy observations: 
    \begin{enumerate}[a]
        \item double applications of the generalized dual bring back the base ideal:$\widehat{\left(\widehat{I}  ^{[\mathbf{a}]} \right)}^{[\mathbf{a}]}=I$;
        \item when $I$ and $J$ are two $\bda$-determined monomial ideals generated in degrees $i$ and $j$ respectively, one has $\widehat{IJ}^{[2\bda]} = \widehat{I}^{[\bda]}\widehat{J}^{[\bda]}$ where $2\bda=(2\bda(1),2\bda(2),\dots,2\bda(n))$.
    \end{enumerate}
\end{Remark}

Next, we recall some of the definitions and theorems regarding the cellular resolution of a monomial ideal from \cite{MS}. One of the main goals of this work is to establish a cellular complex that provides the minimal free resolution of the $\bda$-dual of an ideal. This topic will be further investigated in later sections.

\begin{Definition}
    [{\cite[Section 4.1]{MS}}]
    A \emph{(polyhedral) cell complex} $X$ is a finite collection of finite polytopes (in $\mathbb{R}^n$) called the \emph{faces} or \emph{cells} of $X$, satisfying the following conditions.
    \begin{enumerate}[a]
        \item If $P \in X$ is a polytope in $X$ and $F$ is a face of $P$ then $F \in X$. 
        \item If $P, Q \in X$, then $P \cap Q$ is a common face of $P$ and $Q$. 
    \end{enumerate}
    The maximal faces are called \emph{facets}. The \emph{dimension} of $X$ is determined by the maximal dimension of its facets.
    When $Q\subset P$ are two faces, $Q$ is called a \emph{facet} of $P$ whenever $Q$ is one dimension less than $P$.
    A cell complex $X$ is \emph{labeled} if we can associate to each vertex a vector $\bdalpha_i \in \NN^n$. The induced label $\bdalpha_F$ of any face $F$ of $X$ is the exponent vector of  $\lcm \Set{ \bdx^{\bdalpha_i} \mid i \in F}$.
\end{Definition}

Since each vector $\bdalpha\in\NN^n$ can be identified with the monomial $\bdx^\bdalpha\in \KK[x_1,\dots,x_n]$, for simplicity, we will also say that the above face $F$ is labeled by the monomial $\bdx^{\bdalpha_F}=\lcm\Set{ \bdx^{\bdalpha_i} \mid i \in F}$.

Let $F_k(X)$ be the set of faces of $X$ of dimension $k$.  Note that the empty set is the unique $(-1)$-dimensional face. A cell complex $X$ has an \emph{incidence function} $\varepsilon$, where $\varepsilon(Q, P) \in \{1,-1\}$ if $Q$ is a facet of $P$. Note that the sign is determined by whether the orientation of $P$ induces the orientation of $Q$ where the orientation is determined by some ordering of the vertices.

Let $X$ be a cell complex of dimension $d$. The \emph{cellular free complex $\mathcal{F}_X$ supported on} $X$ is the complex of $\mathbb{N}^n$-graded $R$-modules
\begin{equation}
\mathcal{F}_X: \quad 0 \rightarrow  R^{F_d(X)} \xrightarrow{\partial_d} R^{F_{d-1}(X)} \xrightarrow{\partial_{d-1}} \cdots \xrightarrow{\partial_{2}} R^{F_{1}(X)}\xrightarrow{\partial_{1}} R^{F_{0}(X)} \xrightarrow{\partial_0} R=R^{F_{-1}(X)}  \rightarrow 0,
\label{FX-complex}
\end{equation}
where $R^{F_k(X)} := \bigoplus_{P \in F_k(X)} R(-{\bdalpha}_P)$, and the differential map $\partial_k$ is defined on the basis element $P$ of $R(-\bdalpha_P)$ in $R^{F_k(X)}$ as
\[
\partial(P) = \sum_{Q \text{ is a facet of } P} \varepsilon (Q,P) \bdx^{{\bdalpha}_P -{\bdalpha}_Q}Q. 
\]

We may consider the componentwise comparison partial order on $\mathbb{N}^n$ defined by $\bdalpha \leq \bdbeta$ whenever $\bdbeta -\bdalpha \in \mathbb{N}^n$. If $\bdbeta \in \NN^n$, we define a subcomplex $X_{\leq \bdbeta}$, namely the subcomplex of faces whose labels are less than or equal to $\bdbeta$. 

A common procedure to determine whether $\calF_X$ in \eqref{FX-complex} is a resolution is by applying the following criteria of Bayer and Sturmfels. This criteria is useful because it reduces the question of whether a cellular free complex is acyclic to a question of the geometry of the polyhedral cell complex.

\begin{Lemma} 
    [{\cite{BS}}]
    \label{BayerSturmfels}
    The complex $\mathcal F_X$ is a cellular resolution if and only if for each $\bdbeta$ the complex $X_{\leq \bdbeta}$ is acyclic over the base field $\KK$.
\end{Lemma}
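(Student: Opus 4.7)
The plan is to exploit the $\NN^n$-grading on $\mathcal{F}_X$ and reduce acyclicity to an assertion about each multigraded strand individually. Since every differential in $\mathcal{F}_X$ is homogeneous with respect to the $\NN^n$-grading, $\mathcal{F}_X$ is a resolution of $R/I$ (where $I$ is generated by the vertex labels $\bdx^{\bdalpha_v}$ with $v\in F_0(X)$) if and only if, for every $\bdbeta\in\NN^n$, the strand $(\mathcal{F}_X)_\bdbeta$ is exact as a complex of $\KK$-vector spaces. This is the only place where the $R$-module structure enters; everything below takes place in linear algebra over $\KK$.

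Next I would identify the strands explicitly. For a face $P\in F_k(X)$, the free summand $R(-\bdalpha_P)$ has a one-dimensional $\bdbeta$-component, spanned by $\bdx^{\bdbeta-\bdalpha_P}$, precisely when $\bdalpha_P\le\bdbeta$, and contributes nothing otherwise. Hence $(R^{F_k(X)})_\bdbeta$ carries a canonical $\KK$-basis indexed by $F_k(X_{\le\bdbeta})$. Applied to such a basis element, the differential
\[
\partial(P)=\sum_{Q\text{ facet of }P}\varepsilon(Q,P)\,\bdx^{\bdalpha_P-\bdalpha_Q}\,Q
\]
restricts, in multidegree $\bdbeta$, to $\sum_Q\varepsilon(Q,P)\,Q$ summed over the facets of $P$ that actually belong to $X_{\le\bdbeta}$ (the remaining terms involve monomials that are not of degree $\bdbeta$ and so vanish in the strand). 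Under the identification of bases, this is precisely the $k$-th boundary map of the augmented reduced cellular chain complex $\widetilde C_{\bullet}(X_{\le\bdbeta};\KK)$.

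Combining the two steps, $(\mathcal{F}_X)_\bdbeta$ is canonically isomorphic to the augmented chain complex computing $\widetilde H_\bullet(X_{\le\bdbeta};\KK)$. The exactness of this strand in every homological degree is therefore equivalent to the acyclicity of $X_{\le\bdbeta}$ over $\KK$, and ranging over all $\bdbeta\in\NN^n$ yields the stated equivalence.

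The main technical point I expect to require care is the sign bookkeeping: one must verify that the restriction of $\partial$ to each $\bdbeta$-strand agrees on the nose with the topological cellular boundary rather than with a twisted variant. Once a coherent orientation of the polytopes of $X$ is fixed and $\varepsilon(Q,P)$ is defined consistently with it, this is forced by the identity $\partial^2=0$ for cellular chains, but it warrants an explicit check when setting up the canonical isomorphism above.
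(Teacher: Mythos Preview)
The paper does not prove this lemma; it is quoted with a citation to Bayer and Sturmfels and used as a black box. Your argument is correct and is essentially the standard proof found in that reference (or in Miller--Sturmfels): split $\mathcal{F}_X$ into its $\NN^n$-graded strands and identify each strand $(\mathcal{F}_X)_{\bdbeta}$ with the augmented cellular chain complex of $X_{\le\bdbeta}$ over $\KK$.

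One small simplification: your parenthetical about ``the remaining terms \dots\ vanish in the strand'' is unnecessary. If $P\in X_{\le\bdbeta}$ and $Q$ is a facet of $P$, then $\bdalpha_Q\le\bdalpha_P\le\bdbeta$ automatically, because the label of a face is the $\lcm$ of its vertex labels and hence monotone under inclusion. Thus every facet of $P$ already lies in $X_{\le\bdbeta}$, and the restriction of $\partial$ to the $\bdbeta$-strand is literally the cellular boundary map with the same incidence function $\varepsilon$; no terms are dropped. This observation also disposes of the sign-bookkeeping concern you flag at the end: nothing is twisted, since the restricted map coincides with the topological boundary on the nose.
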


However, we will take a different approach by applying iterated mapping cones.
Let us recall some of the basic constructions in \cite{P}.

\begin{Definition}
    [{\cite[Section 27]{P}}]
    \label{mapping-cone}
    Let $\varphi: (\bm{U},d)\to (\bm{U'},d')$ be a map of complexes of finitely generated $R$-modules. The map $\varphi$ is also called a \emph{comparison map}. The \emph{mapping cone} of $\varphi$ is the complex $\bm{W}$ with the differential $\partial$, defined as follows:
    \begin{align*}
        W_i & = U_{i-1}\oplus U_i' \text{ as a module}, \\
        \partial|_{U_{i-1}}&=-d+\varphi: U_{i-1}\to U_{i-2}\oplus U_{i-1}',\\
        \partial|_{U_i'} & = d': U_i'\to U_{i-1}',
    \end{align*}
    for each $i$. 
\end{Definition}

\begin{Remark}
    [{\cite[Construction 27.3 and the discussion before it]{P}}]
    Suppose that $\bm{U}$ and $\bm{U'}$ above are free resolutions of finitely generated modules $V$ and $V'$ respectively, while $\varphi:V\to V'$ is an injective homomorphism of modules. Then, there is a lifting of $\varphi$ to $\bm{U}\to \bm{U'}$, which will also be denoted by $\varphi$. The mapping cone of $\varphi$ provides a free resolution of the quotient module $V'/\varphi(V)$. We are interested in the case when monomial ideals $I=(m_1,\dots,m_v)$ and $I'=(m_1,\dots,m_{v-1})$, while $V'=R/I'$ and $V=R/(I':m_v)$. Notice that there is a short exact sequence here:
    \[
    0\to R/(I':m_v)\xrightarrow{\cdot m_v} R/I' \to R/I \to 0.
    \]
\end{Remark}

In later sections, we will work on duals relative to Ferrers ideals, stable ideals and strongly stables. We recall some definitions first. For a monomial $m\in R=\KK[x_1,\dots,x_n]$, we write $\supp(m)$ for the set $\Set{i\in [n]|\text{$x_i$ divides $m$}}$ and $\supp_1(m)$ for the subset $\supp(m)\setminus\{1\}$. We also write $\max(m)$ for $\max\supp(m)$. A monomial ideal $I$ is called \emph{stable} if for each monomial $m\in I$, for each $i<\max(m)$, one has $m x_i/x_{\max(m)}\in I$. The ideal $I$ is called \emph{strongly stable} if for each monomial $m\in I$, for each $x_i$ dividing $m$ and $j<i$, one has $mx_j/x_i\in I$. Such ideals are also called \emph{Borel ideals} in the literature. For both types, it is easy to see that when the ideal $I$ is generated in the same degree, it suffices to check the monomials in $G(I)$. Throughout this paper, we will assume that all the monomials in $G(I)$ have degree $d$. The definition below is defined in \cite{CN1} and we will use a similar concept on the duals later. 
\begin{Definition}
    [{\cite[Definition 3.1]{CN1}}]
    \label{Spec}
    Let  $S = \KK[x_1, \ldots, x_m, y_1, \ldots, y_n]$ be a polynomial ring over a field $\KK$ and $I$ be a monomial ideal in $S$. Let $\sigma:\{y_1, \ldots, y_n\} \rightarrow \{x_1, \ldots, x_k\}$ be a map  that sends $y_i$ to $x_i$ where $k = \max\{m,n\}$ and $x_{m+1}, \ldots, x_k$ are (possibly) additional variables. By abuse of notation, we use the same symbol to denote the substitution homomorphism $\sigma:S \rightarrow R= \KK[x_1, \ldots, x_k]$, given by $x_i \mapsto x_i$ and $y_i \mapsto \sigma(y_i)$. We call $\sigma$ a \emph{specialization map} and the monomial ideal $\overline I := \sigma (I) \subseteq R$ the \emph{specialization} of $I$. 
\end{Definition}

Here is an example of the specialization of an ideal.

\begin{Example}
    \label{less}
    Let $S = \KK[x_1, x_2, y_1, y_2, y_3]$. Consider the monomial ideal $I$ generated by  
    \[
    x_1y_1, x_1y_2, x_1y_3, x_2y_1, x_2y_2
    \]
    in $S$. The specialization of $I$ is the ideal 
    $$
    \overline I = (x_1^2, x_1x_2, x_1x_3, x_2^2)\subseteq \KK[x_1,x_2,x_3].
    $$
    Since the specialization map sends both $x_1y_2$ and $x_2y_1$ to the same element, $\overline I$ has $4$ minimal generators while $I$ has $5$.
\end{Example}

Motivated by this example, we consider the following class of monomial ideals of degree $2$.
\begin{Definition}
    [{\cite[Definition 3.4]{CN1}}] \label{genferdef} Let $\lambda = (\lambda_1, \ldots, \lambda_m)$ be a partition with $\lambda_i \in \Z^+$, and $\lambda_m \leq \lambda_{m-1} \leq \cdots \leq \lambda_1$. Let $\mu = (\mu_1, \ldots, \mu_m) \in \Z^m$ be a vector with
    $$
    0 \leq \mu_1 \leq \cdots \leq \mu_m <\lambda_m.
    $$
    Since $\lambda_m \leq \lambda_{m-1} \leq \cdots \leq \lambda_1$, in particular, $\mu_i < \lambda_i$. 
    The ideal
    $$
    I_{\lambda-\mu} := (x_iy_j \mid 1 \leq i \leq m,\, \mu_i < j \leq \lambda_i)  
    $$
    is called a \emph{generalized Ferrers ideal}.
\end{Definition}

When $\mu$ is the zero vector, we get back the original \emph{Ferrers ideal} $I_{\lambda}$. On the other hand, when $\mu_i \geq i-1$ for $i =1, \ldots, m$, the generalized Ferrers ideal and its specialization have the same number of minimal generators, since no colliding phenomenon as in \Cref{less} will ever happen. 
The common number of minimal generators is simply $(\lambda_1 + \cdots + \lambda_m) - (\mu_1 + \cdots + \mu_m)$. We will investigate similar patterns in Section 3 and 6.
Notice that the most interesting case is when $\mu_i = i-1$ for each $i$. In this situation, the specialization of the generalized Ferrers ideal is a strongly stable ideal, as observed in \cite{CN1}. 

\begin{Example}
    \label{443}
    Let $S = \KK[x_1, x_2, x_3, y_1, y_2, y_3, y_4]$ and $I$ be the  Ferrers ideal for $\lambda = (4,4,3)$, that is, 
    $$I_\lambda = (x_1y_1, x_1y_2, x_1y_3, x_1y_4, x_2y_1, x_2y_2, x_2y_3, x_2y_4, x_3y_1, x_3y_2, x_3y_3).$$
    Let $\mu_i = i-1$ for $i = 1, 2, 3$. 
    The generalized Ferrers ideal is
    $$I_{\lambda-\mu} = (x_1y_1, x_1y_2, x_1y_3, x_1y_4, x_2y_2, x_2y_3, x_2y_4, x_3y_3).$$
    Then the specialization map yields the ideal 
    $$\overline{I_{\lambda-\mu}}= (x_1^2, x_1x_2, x_1x_3, x_1x_4, x_2^2, x_2x_3, x_2x_4, x_3^2).$$
    Note that $\overline{I_{\lambda-\mu}}$ is a strongly stable ideal in $R = \KK[x_1, x_2, x_3, x_4]$. 
\end{Example}

\section{Toric rings associated to dual ideals}

Given $I=(f_1, \ldots, f_{\nu})$, a monomial ideal generated in the same degree, we can define a toric ring, $ \KK[f_1, \ldots, f_{\nu}]$,  which is isomorphic to the special fiber ring of $I$. Recall that the special fiber ring of $I$ is the subring $\mathcal F(I) = \KK[f_1t, \ldots, f_{\nu} t]\subset R[t]$ where $t$ is a new variable. Geometrically,  the special fiber ring $\mathcal F(I)$ is the homogeneous coordinate ring of the image of a map $\mathbb{P}^{n-1} \rightarrow \mathbb{P}^{{\nu}-1}$. There is a natural surjective map $\phi: \KK[T_1, \ldots, T_{\nu}] \rightarrow \KK[f_1t, \ldots, f_{\nu}t]$. Consequently, we have a short exact sequence
\[
0 \rightarrow J \rightarrow \KK[T_1, \ldots, T_{\nu}] \rightarrow \KK[f_1t, \ldots, f_{\nu}t] \rightarrow  0, 
\]
where $J=\ker(\phi)$ is generated by all forms $F(T_1, \ldots, T_{\nu})$ such that $F(f_1, \ldots, f_{\nu})  = 0$. Note that $J$ is graded. 
In this section, we work on finding the defining equations of $J$ where $I$ is the $\bda$-dual of a monomial ideal generated in the same degree. 

The following theorem shows that the special fiber rings of the generalized Newton complementary duals and the given monomial ideals are isomorphic. This is a straightforward generalization of Costa and Simis \cite[Lemma 1.7]{CS}.

\begin{Theorem}
    \label{FiberIso}
    Let $R = \KK[x_1, \ldots, x_n]$ be a polynomial ring in $n$ variables over a field $\KK$. Let $I$ be an $\bda$-determined monomial ideal such that $I$ is generated in the same degree. Then the special fiber ring of $I$ and that of $\widehat{I}^{[\bda]}$ are isomorphic:
    \[
    \mathcal F(I) = \KK[It] \cong \KK[\widehat{I}^{[\bda]} t ]=  \mathcal F(\widehat I^{[\bda]} ).
    \]
\end{Theorem}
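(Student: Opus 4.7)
My plan is to exhibit both special fiber rings as quotients of the same polynomial ring $\KK[T_1,\dots,T_\nu]$ and then show that the two presentation ideals coincide as sets of binomials. Write $G(I)=\{f_1,\dots,f_\nu\}$ with $\deg f_i = d$ for all $i$ (so, automatically, $\deg\widehat{f_i} = |\bda|-d$ for every $i$, where $|\bda|=\bda(1)+\cdots+\bda(n)$). Consider the $\KK$-algebra surjections
\[
\phi\colon \KK[T_1,\dots,T_\nu]\twoheadrightarrow \KK[It],\quad T_i\mapsto f_it,
\qquad
\widehat\phi\colon \KK[T_1,\dots,T_\nu]\twoheadrightarrow \KK[\widehat{I}^{[\bda]}t],\quad T_i\mapsto \widehat{f_i}t.
\]
The plan is to show $\ker(\phi)=\ker(\widehat\phi)$ as ideals of $\KK[T_1,\dots,T_\nu]$; this yields the claimed isomorphism after factoring.

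Both kernels are toric, hence generated by pure binomials $T^{\bdalpha}-T^{\bdbeta}$. Furthermore, because the auxiliary variable $t$ tracks the total $T$-degree in both maps, any binomial in $\ker(\phi)$ or $\ker(\widehat\phi)$ must satisfy $|\bdalpha|=|\bdbeta|$, where $|\bdalpha|=\sum_i \bdalpha_i$. Under this balancing condition, the key computation is the identity
\[
\prod_{i=1}^{\nu}\widehat{f_i}^{\,\bdalpha_i} \;=\; \prod_{i=1}^{\nu}\Bigl(\frac{\bdx^{\bda}}{f_i}\Bigr)^{\bdalpha_i} \;=\; \frac{\bdx^{|\bdalpha|\,\bda}}{\prod_{i=1}^{\nu} f_i^{\,\bdalpha_i}}
\]
in the field of fractions of $R$. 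When $|\bdalpha|=|\bdbeta|$, the numerators $\bdx^{|\bdalpha|\bda}$ and $\bdx^{|\bdbeta|\bda}$ agree, so
\[
\prod_i \widehat{f_i}^{\,\bdalpha_i}=\prod_i \widehat{f_i}^{\,\bdbeta_i}
\quad\Longleftrightarrow\quad
\prod_i f_i^{\,\bdbeta_i}=\prod_i f_i^{\,\bdalpha_i}.
\]
Equivalently, $\widehat\phi(T^{\bdalpha}-T^{\bdbeta})=0$ iff $\phi(T^{\bdalpha}-T^{\bdbeta})=0$. Since each kernel is generated by such balanced binomials, we conclude $\ker(\phi)=\ker(\widehat\phi)$, and therefore $\mathcal F(I)\cong \KK[T_1,\dots,T_\nu]/\ker(\phi)=\KK[T_1,\dots,T_\nu]/\ker(\widehat\phi)\cong \mathcal F(\widehat{I}^{[\bda]})$.

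There is no serious obstacle; the only point that requires a moment of care is the justification that the toric kernels are generated by binomials whose exponent vectors have equal total sum $|\bdalpha|=|\bdbeta|$. This follows immediately from the standard grading on $\KK[T_1,\dots,T_\nu]$ together with the fact that both $I$ and $\widehat{I}^{[\bda]}$ are equigenerated (so that $\phi$ and $\widehat\phi$ are graded with respect to the total $T$-degree, tracked by the powers of $t$). With that observation in hand, the identification of the two kernels is just the display above, and we recover Costa--Simis's result \cite[Lemma 1.7]{CS} as the special case $\bda=\bdN$.
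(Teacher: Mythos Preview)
Your proof is correct and follows essentially the same approach as the paper: both arguments hinge on the identity $\prod_i \widehat{f_i}^{\,\bdalpha_i} = \bdx^{|\bdalpha|\bda}/\prod_i f_i^{\,\bdalpha_i}$ to show that the binomial relations defining the two toric rings coincide. The only cosmetic difference is that the paper uses two copies of the polynomial ring with a map between them (citing \cite{Taylor} for the binomial generation), whereas you work directly with a single presentation ring and invoke the standard fact that toric kernels are generated by balanced pure binomials.
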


We give a proof for completeness.

\begin{proof}  
    For the given vector ${\bda}$, we write $\widehat{I}$ instead of $\widehat{I}^{[\bda]}$ to simplify the notation.  Suppose $G(I) = \{f_1, \ldots, f_{\nu}\}$. 
    Let $J_r$ be the degree $r$ piece of the kernel ideal $J$ above. If ${\bdalpha} = (i_1, \ldots, i_r)$ is a non-decreasing sequences of integers in $[\nu]$, we write $T_{\bdalpha} = \prod_k T_{i_k}$ and $f_{\bdalpha} = \prod_k f_{i_k}$.
    By \cite{Taylor}, $J_r$ is generated by polynomials of the form
    \begin{equation}
        \label{relations}
        \Set{ T_{\bdalpha} - T_{\bdbeta}\mid {\bdalpha} = (i_1, \ldots, i_r) \text{ and } {\bdbeta} = (j_1, \ldots, j_r) \text{ with } f_{\bdalpha}=f_{\bdbeta}}.
    \end{equation}
    
    Since $\widehat{I}$ is again a monomial ideal, there is a surjective map $\psi: \KK[S_1, \ldots, S_{\nu}] \rightarrow \KK[\widehat{I}t]$ given by $\psi(S_i) = \widehat{f_i}t$.
    Let $J'$ be the kernel of $\psi$. Likewise, its degree $r$ piece $J'_r$ is generated by polynomials of the form
    \[
    \Set{ S_{\bdalpha} - S_{\bdbeta}\mid {\bdalpha} = (i_1, \ldots, i_r) \text{ and } {\bdbeta} = (j_1, \ldots, j_r) \text{ with } \widehat{f_{\bdalpha}}=\widehat{f_{\bdbeta}}},
    \]
    where $S_{\bdalpha} = \prod_k S_{i_k}$ and $\widehat{f}_{\bdalpha} = \prod_k \widehat{f}_{i_k}$. 
    One notice immediately that for ${\bdalpha} = (i_1, \ldots, i_r)$,
    \begin{equation}
        \label{fhatalpha}
        \widehat{f_{\bdalpha}} = \frac{\bdx^\bda}{f_{i_1}} \frac{\bdx^\bda}{f_{i_2}} \cdots \frac{\bdx^\bda}{f_{i_r}} = \frac{(\bdx^{\bda})^r}{f_{\bdalpha}}.
    \end{equation}

    To show that the two special fiber rings are isomorphic, we define the natural map 
    \begin{eqnarray*}
        w': \KK[T_1, \ldots, T_{\nu}] &\rightarrow & \KK[S_1, \ldots, S_{\nu}]\\
        T_i & \mapsto & S_i.
    \end{eqnarray*} 
    For each $h \in \ker(\phi)$, we may indeed assume that $h=T_{\bdalpha} -  T_{\bdbeta}\in J_r$ as in \eqref{relations}. Now, 
    using (\ref{fhatalpha}), we have
    \[
    \psi(w'(h)) = \psi(S_{\bdalpha} - S_{\bdbeta}) =\widehat{f_{\bdalpha}} - \widehat{f_{\bdbeta}}
    =\frac{(\bdx^{\bda})^r}{f_{\bdalpha}} - \frac{(\bdx^{\bda})^r}{f_{\bdbeta}}=0,
    \]
    since $f_{\bdalpha} = f_{\bdbeta}$. 
    Thus, $w'(h) \in \ker \psi = J'$ and in turn $w'(J) \subseteq J$. This also induces a map
    \begin{eqnarray*}w : \KK[It] &\rightarrow & \KK[\widehat{I}t]\\
        f_it & \mapsto & \widehat{f_i} t,
    \end{eqnarray*}
    which is well-defined.

    Since $\widehat{\widehat{I}}=I$, we can define similar maps $v: \KK[\widehat{I}t] \rightarrow \KK[\widehat{\widehat{I}}t] = \KK[It]$ and $v': \KK[S_1, \ldots, S_{\nu}] \rightarrow \KK[T_1, \ldots, T_{\nu}]$ as above.    By the same argument, we have that $v'(J') \subseteq J$.  Since $w'$ and $v'$ are obviously inverse maps, we have $J = v'(w'(J)) \subseteq v'(J') \subseteq J$. Thus $v'(J') = J$.  Similarly, $w'(J) = J'$. Thus we have
    \[
    \KK[It] \cong \frac{\KK[T_1, \ldots, T_{\nu}]}{J} \cong \frac{\KK[S_1, \ldots, S_{\nu}]}{J'}  \cong \KK[\widehat I t].
    \qedhere
    \]
\end{proof}

The work of Corso, Nagel, Petrovi\`c, and Yuen \cite{CNPY} considered the special fiber ring $\mathcal{F}(\overline{I_{\lambda-\mu}})$ of the specialization of the generalized Ferrers ideal $I_{\lambda-\mu}$ with $\lambda=(\lambda_1,\ldots,\lambda_m )$ and $\mu=(\mu_1,\dots,\mu_m)$, such that $\mu_i\ge i-1$ for each $i$.
For this purpose,
consider the polynomial ring 
\[
\KK[\mathbf{T}_{\lambda}]:=\KK[T_{ij}\mid x_iy_j\in I_{\lambda-\mu}] = \KK[T_{ij}\mid 1\le i\le m,\, \mu_i<j\le \lambda_i].
\]
Let $n=\lambda_1$.  Thus, we can think of $\mathbf{T}_{\lambda}$ as an $m\times n$ matrix with the variable $T_{ij}$ as the $(i,j)$ entry, when $x_iy_j\in I$; otherwise, the entry is $0$. The \emph{symmetrized matrix} $\mathbf{S}_{\lambda}$ is the $n\times n$ matrix obtained by reflecting $\mathbf{T}_{\lambda}$ along the main diagonal. Notice that $n\ge m$ and $\mathbf{T}_{\lambda}$ is upper-triangular.

\begin{Theorem}
    [{\cite[Theorem 4.2 and Proposition 4.1]{CNPY}}]
    Let $I \subseteq \KK[x_1, \ldots, x_n]$ be a specialization of generalized Ferrers ideal.  The special fiber ring of $I$ is a determinantal ring arising from the $2\times 2$ minors of a symmetric matrix. More precisely, there is a graded isomorphism 
    $$
    \mathcal F(I) \cong \KK[\mathbf{T}_{\lambda}]/I_2(\mathbf{S}_\lambda),
    $$ 
    by using notations above.  Furthermore, the ring $\mathcal F(I)$ is a Koszul normal Cohen-Macaulay domain of Krull dimension $n$.  

\end{Theorem}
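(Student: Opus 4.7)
The plan is to analyze the natural surjection
\[
\phi\colon \KK[\mathbf{T}_{\lambda}] \twoheadrightarrow \mathcal{F}(I), \qquad T_{ij} \mapsto x_ix_j t,
\]
whose source is the polynomial ring on the variables attached to the generating shape of $I$, to identify $\ker\phi$ with $I_2(\mathbf{S}_\lambda)$, and then to extract the stated ring-theoretic properties from the classical theory of symmetric determinantal ideals. The easy containment $I_2(\mathbf{S}_\lambda) \subseteq \ker\phi$ is a direct verification: for any $2\times 2$ submatrix of $\mathbf{S}_\lambda$ whose four entries are all generators of $I$, the minor maps to $x_ix_jx_kx_l t^2 - x_ix_kx_jx_l t^2 = 0$, and the symmetrization $T_{ji}:=T_{ij}$ is internally consistent because $\phi(T_{ij})=\phi(T_{ji})$ by the commutativity of $R$.

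For the reverse containment, I would use the binomial/toric presentation of the kernel of a toric map (the same Taylor-style description invoked in the proof of Theorem \ref{FiberIso}), which tells us that $\ker\phi$ is generated by binomials $T_{\bdalpha} - T_{\bdbeta}$ with $\prod_k x_{i_k}x_{j_k} = \prod_k x_{i'_k}x_{j'_k}$. Any such multi-set equality can be transformed into the trivial one by a sequence of swap moves $\{i,j\}\cdot\{k,l\} \rightsquigarrow \{i,k\}\cdot\{j,l\}$, and each elementary swap is precisely a generator of $I_2(\mathbf{S}_\lambda)$. The hypothesis $\mu_i \geq i-1$ (which, as noted after Definition \ref{genferdef}, makes the specialization essentially a strongly stable ideal) is exactly what guarantees that every intermediate configuration produced by the bubble-sort style procedure stays inside the allowable staircase shape, so that each elementary swap corresponds to an honest $2\times 2$ minor of $\mathbf{S}_\lambda$ rather than one with phantom entries.

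Once the presentation $\mathcal F(I) \cong \KK[\mathbf{T}_\lambda]/I_2(\mathbf{S}_\lambda)$ is established, the remaining claims follow from well-developed machinery for symmetric $2\times 2$ determinantal ideals: a diagonal term order produces a squarefree quadratic Gr\"obner basis for $I_2(\mathbf{S}_\lambda)$, which yields Koszulness and an ASL structure on the quotient; from the latter, or alternatively from Hochster--Eagon, one obtains Cohen--Macaulayness and normality. The Krull dimension $n$ is read off from the rank of the lattice spanned by the exponent vectors $e_i+e_j$ of the generators $x_ix_j$; under the assumption $\mu_i \geq i-1$ the diagonal squares $x_i^2$ and enough off-diagonal products are present to span all of $\ZZ^n$, pinning the dimension down to $n$.

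The principal obstacle is the reverse containment above: one has to verify, with real combinatorial care, that any identification of two products of ``edges'' drawn from the staircase shape can be connected by swap moves that never leave the shape. This is a shifted/shellable analogue of the classical argument that $2\times 2$ minors of a generic symmetric matrix generate their toric kernel, and the hypothesis on $\mu$ is used essentially to keep the swap trajectory inside the shape.
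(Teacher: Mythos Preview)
This theorem is not proved in the present paper: it is quoted verbatim from \cite[Theorem 4.2 and Proposition 4.1]{CNPY} and used as a black box to deduce Corollary \ref{specfibershape}. There is therefore no ``paper's own proof'' to compare your proposal against; the authors simply cite the result.

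As an outline of how the cited result might be established, your sketch is broadly reasonable --- the binomial description of the toric kernel, the reduction to swap moves, and the appeal to the Gr\"obner/ASL theory of symmetric ladder determinantal ideals are indeed the ingredients one finds in the literature (and in \cite{CNPY} in particular). Two cautions, though. First, your dimension argument asserts that ``the diagonal squares $x_i^2$ \dots\ are present'' under the hypothesis $\mu_i\ge i-1$; this is false unless $\mu_i=i-1$ on the nose, so the rank computation needs a different witness set (for instance the ``staircase'' generators $x_ix_{\mu_i+1}$ together with the top row). Second, the heart of the matter --- that the swap trajectory never leaves the ladder shape --- is precisely where \cite{CNPY} does real work, and your proposal only names the obstacle without indicating a mechanism (e.g.\ a straightening law or an explicit term order) for overcoming it. Absent that, the reverse containment remains a claim rather than a proof.
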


By the above theorem and \Cref{FiberIso}, we can describe the special fiber rings of generalized Newton complementary duals of specialization of generalized Ferrers ideals.

\begin{Corollary}
    \label{specfibershape}
    Let $I$ be an $\bda$-determined specialization of generalized Ferrers ideal, and let $\widehat{I}^{[\bda]}$ be the $\bda$-dual of $I$. The special fiber ring of $\widehat{I}^{[\bda]}$ is a determinantal ring arising from the $2\times 2$ minors of a symmetric matrix. More precisely, there is a graded isomorphism 
    $$\mathcal F(\widehat I^{[\bda]}) \cong \KK[\mathbf{T}_{\lambda}]/I_2(\mathbf{S}_\lambda),$$
    by using notations above.  Furthermore, $\mathcal F(\widehat I^{[\bda]})$ is a Koszul normal Cohen-Macaulay domain of Krull dimension $n$.  
\end{Corollary}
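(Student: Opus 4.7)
The plan is to obtain this corollary as an essentially formal consequence of two previously established results: Theorem \ref{FiberIso} (the isomorphism of special fiber rings $\mathcal{F}(I)\cong \mathcal{F}(\widehat{I}^{[\bda]})$ for ideals generated in a single degree) and the Corso--Nagel--Petrović--Yuen theorem quoted immediately above the corollary (which describes $\mathcal{F}(I)$ explicitly as the determinantal ring $\KK[\mathbf{T}_\lambda]/I_2(\mathbf{S}_\lambda)$, along with the Koszul, normal, Cohen--Macaulay, and Krull dimension statements).

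First, I would verify the hypotheses of Theorem \ref{FiberIso} are in force: since the generalized Ferrers ideal $I_{\lambda-\mu}$ is generated in degree two, its specialization $I = \overline{I_{\lambda-\mu}}$ is again generated in degree two, so $I$ is a monomial ideal generated in a single degree. The hypothesis that $I$ is $\bda$-determined is built into the statement. Therefore Theorem \ref{FiberIso} applies and yields a graded isomorphism $\mathcal{F}(I)\cong \mathcal{F}(\widehat{I}^{[\bda]})$. Composing with the CNPY isomorphism $\mathcal{F}(I)\cong \KK[\mathbf{T}_\lambda]/I_2(\mathbf{S}_\lambda)$ produces the asserted isomorphism
\[
\mathcal{F}(\widehat{I}^{[\bda]})\;\cong\;\KK[\mathbf{T}_\lambda]/I_2(\mathbf{S}_\lambda).
\]

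For the second part of the conclusion, I would simply note that Koszulness, normality, the Cohen--Macaulay property, and Krull dimension are all invariants of the graded ring up to graded isomorphism. Since CNPY establishes these properties for $\mathcal{F}(I)$ and the above isomorphism is graded, they transfer verbatim to $\mathcal{F}(\widehat{I}^{[\bda]})$, giving a Koszul normal Cohen--Macaulay domain of Krull dimension $n$.

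There is no real obstacle here; the content has already been done in Theorem \ref{FiberIso}. The only point that merits a sentence of care is that the isomorphism constructed in the proof of Theorem \ref{FiberIso} is indeed a graded isomorphism of standard graded $\KK$-algebras, which is visible from its description on the algebra generators $f_i t \mapsto \widehat{f_i}t$ (both sides live in degree one of the special fiber ring). Once this is noted, the corollary follows by pure transport of structure, and the proof amounts to citing Theorem \ref{FiberIso} together with the CNPY theorem.
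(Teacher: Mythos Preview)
Your proposal is correct and matches the paper's approach exactly: the paper presents this corollary without a separate proof, stating just before it that the result follows by combining the CNPY theorem with Theorem~\ref{FiberIso}. Your additional remarks (that $I$ is generated in a single degree so Theorem~\ref{FiberIso} applies, and that the relevant ring-theoretic properties transfer along a graded isomorphism) are precisely the routine verifications implicit in the paper's one-line justification.
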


\section{Properties of the generalized Newton complementary dual}

In this section, we provide additional nice properties of the generalized Newton complementary dual.  Within this section, we will always consider the $\emph{co-lexicographic}$ total order $\prec$ on the monomials in $R$ of degree $d$: we will say $\bdx^{\bdalpha} \prec \bdx^{\bdbeta}$ if there is a $k\in [n]$ such that $\bdalpha(k)<\bdbeta(k)$, while $\bdalpha(j)=\bdbeta(j)$ for $k+1\le j\le n$. For our monomial ideal $I$ generated in degree $d$, we will always assume that $G(I)=\Set{f_1\prec f_2 \prec \cdots \prec f_{\nu}}$. The following observation is easy to verify.

\begin{Lemma}
    Let $I$ be a (strongly) stable ideal generated in the same degree as above. Then the sub-ideal $I':=(f_1,\dots,f_{\nu-1})$ is also (strongly) stable.
\end{Lemma}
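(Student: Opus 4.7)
The plan is to show that each defining (strong) stability move on a generator $f_k$ with $k\le\nu-1$ produces a monomial that is strictly co-lex smaller than $f_k$, hence strictly co-lex smaller than $f_\nu$; combining this with the fact that for ideals generated in a single degree $d$ the (strong) stability condition needs to be checked only on the monomial generating set will finish the argument.

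First I would make the reduction: since $I'$ is generated in the single degree $d$, and since a degree-$d$ monomial lying in a monomial ideal generated in degree $d$ must itself be a minimal generator, it suffices to verify the Borel-type closure conditions on elements of $G(I')=\{f_1,\dots,f_{\nu-1}\}$, and the output of such a move is automatically in $G(I')$ as soon as it lies in $I'$.

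Next comes the key co-lex comparison. Take $f_k=\bdx^{\bdalpha}$ with $k\le\nu-1$. A strongly stable move replaces $x_i$ (with $\bdalpha(i)>0$) by $x_j$ for some $j<i$, producing $m'=\bdx^{\bdalpha'}$ where $\bdalpha'(i)=\bdalpha(i)-1$, $\bdalpha'(j)=\bdalpha(j)+1$, and $\bdalpha'(\ell)=\bdalpha(\ell)$ for $\ell\ne i,j$. The highest index at which $\bdalpha$ and $\bdalpha'$ differ is $i$, and there $\bdalpha'(i)<\bdalpha(i)$, so $m'\prec f_k$. The weak stability case is identical, with $i=\max(f_k)$. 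Since $I$ is (strongly) stable, $m'\in I$, so $m'=f_\ell$ for some $\ell$; by $m'\prec f_k$ we get $\ell<k\le \nu-1$, hence $m'\in I'$.

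I do not anticipate any real obstacle: the only thing to guard against is a careless mis-ordering of co-lex (which compares from the highest index downwards, so that a single decrease in the top-most disagreeing coordinate makes a monomial smaller). Once that is stated cleanly, both the stable and strongly stable cases follow in a single line, and the lemma is proved.
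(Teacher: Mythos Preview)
Your argument is correct and is precisely the natural verification the paper has in mind; the paper itself omits the proof, noting only that the observation ``is easy to verify,'' and your co-lex comparison (a Borel move decreases the exponent at the highest index of disagreement, hence drops the monomial strictly in the $\prec$ order) together with the reduction to $G(I')$ already stated in the paper is exactly what is needed.
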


\begin{Theorem}
    \label{LinQuo}
    Let $R=\KK[x_1,\dots,x_n]$ be a polynomial ring in $n$ variables over a field $\KK$. Let $I$ be an $\bda$-determined monomial ideal such that $I$ is generated in the same degree $d$. If $I$ is stable with $d=2$, or $I$ is strongly stable, then the $\bda$-dual $\widehat{I}^{[\bda]}$ has linear quotients. In particular, $\widehat{I}^{[\bda]}$ has a linear resolution.
\end{Theorem}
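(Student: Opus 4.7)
I would order the generators of $\widehat{I}^{[\bda]}$ as $\widehat{f_1},\widehat{f_2},\ldots,\widehat{f_\nu}$, matching the co-lex order on $G(I)$ fixed in the paper, and prove linear quotients by induction on $\nu$. The base $\nu=1$ is vacuous. For the inductive step, the preceding lemma gives that $I':=(f_1,\ldots,f_{\nu-1})$ is again (strongly) stable and is still $\bda$-determined, so by induction $\widehat{I'}^{[\bda]}=(\widehat{f_1},\ldots,\widehat{f_{\nu-1}})$ has linear quotients in the induced order. It therefore suffices to prove that the single new colon ideal $C:=(\widehat{f_1},\ldots,\widehat{f_{\nu-1}}):\widehat{f_\nu}$ is generated by variables. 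A direct computation on exponent vectors yields $\gcd(\widehat{f_j},\widehat{f_\nu})=\bdx^{\bda}/\lcm(f_j,f_\nu)$, whence $\widehat{f_j}/\gcd(\widehat{f_j},\widehat{f_\nu})=\lcm(f_j,f_\nu)/f_j=f_\nu/\gcd(f_\nu,f_j)$, and so
\[
C=(f_\nu/\gcd(f_\nu,f_j):j<\nu).
\]

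To identify $C$, I would first show that every generator is divisible by some $x_l$ with $l\in\supp_1(f_\nu)$: letting $l$ be the largest index where $f_j$ and $f_\nu$ differ, the relation $f_j\prec f_\nu$ forces the $x_l$-exponent of $f_\nu$ to strictly exceed that of $f_j$, so $x_l\mid f_\nu/\gcd(f_\nu,f_j)$ and $l\in\supp(f_\nu)$; equal total degrees rule out $l=1$. In the strongly stable case, I then observe that for each $l\in\supp_1(f_\nu)$ the monomial $f_\nu\cdot x_1/x_l$ lies in $I$ by strong stability, is a generator of $I$ by degree, strictly precedes $f_\nu$ in co-lex (only the index $l$ decreases), and yields quotient exactly $x_l$; hence $C=(x_l:l\in\supp_1(f_\nu))$, an ideal of variables.

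In the stable degree-$2$ subcase, write $f_\nu=x_ax_b$ with $a\le b$; since $\nu\ge 2$ and $x_1^2$ is co-lex minimal among quadratic monomials, $b\ge 2$. Stability applied to $f_\nu$ delivers $x_ax_1\in I$, hence in $G(I)$; this generator strictly precedes $f_\nu$, and a short case-check in the subcases $a=1$, $1<a<b$, $a=b$ shows $f_\nu/\gcd(f_\nu,x_ax_1)=x_b$, so $x_b\in C$. Every other generator of $C$ has degree one or two: a degree-one generator is itself a variable (already in $C$), while a degree-two generator only arises when $\gcd(f_\nu,f_j)=1$, in which case it equals $f_\nu=x_ax_b$ and is divisible by $x_b\in C$. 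Hence $C$ is generated by variables in this case as well. The main obstacle is precisely this stable $d{=}2$ subcase, where (unlike the strongly stable situation) one cannot expect $C=(x_l:l\in\supp_1(f_\nu))$ exactly---the variable $x_a$ may be absent from $C$---so one has to argue that the ``coprime quadratic'' contributions are always absorbed by the single variable $x_b$. Once linear quotients is established, the linear resolution is immediate from the equigenerated hypothesis, completing the proof.
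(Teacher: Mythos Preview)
Your proof is correct and follows essentially the same strategy as the paper: induct on $\nu$, reduce to showing the last colon ideal is generated by variables, and use (strong) stability to produce the needed variable generators while arguing that any higher-degree contributions are redundant. The paper phrases the colon ideal via the set $X_\nu=\{x_j:\exists\,i<j,\ f_\nu x_i/x_j\in G(I)\}$ and proves $J=(X_\nu)$ exactly, whereas you compute the colon ideal explicitly as $(f_\nu/\gcd(f_\nu,f_j):j<\nu)$ via the identity $\widehat{f_j}/\gcd(\widehat{f_j},\widehat{f_\nu})=f_\nu/\gcd(f_\nu,f_j)$; this is a cosmetic difference, and your handling of the stable $d=2$ subcase (showing $x_b\in C$ absorbs any quadratic contribution) matches the paper's contradiction argument.
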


\begin{proof}
    Without loss of generality, we assume that $d\ge 2$.  Write $I':=(f_1\prec \cdots \prec f_{\nu-1})$ as above. By induction, it suffices to show that the colon ideal 
    \[
    J:=\widehat{I'}^{[\bda]}:\widehat{I}^{[\bda]}=\widehat{I'}^{[\bda]}:\widehat{f_{\nu}}
    \] 
    is linear. Since $\deg(f_{\nu})=d$, we may write $f_{\nu}=x_{t_1}x_{t_2}\cdots x_{t_d}$ with $t_1\le t_2\le \cdots \le t_d=\max(f_{\nu})$.
    Write 
    \[
    X_\nu:=\Set{x_j| \text{there exists $i<j$ such that $f_{\nu}x_i/x_j\in G(I)$}}.
    \]
    Notice that $I$ is stable. Thus $f_{\nu} x_{t_d-1}/x_{t_d}\in G(I)$. This means that $x_{t_d}\in X_{\nu}\ne \varnothing$. We want to show that $J=(X_{\nu})$. This, in particular, implies that $J$ is linear.

    Notice that for each $x_j\in X_{\nu}$ with $i<j$ and $f':=f_{\nu}x_i/x_j\in G(I')$, it can be translated into $x_{j}\widehat{f_{\nu}}=x_{i}\widehat{f'}$.  Thus $x_{j}\in J$. In turn, $(X_{\nu})\subseteq J$.

    Now, take a minimal monomial generator $y$ of $J$. By definition, $y\widehat{f_{\nu}}=w\widehat{f_k}$ for some integer $k<\nu$ and some monomial $w\in R$. This is equivalent to saying that $yf_k=wf_{\nu}$. By the minimality of $y$, one has $\gcd(y,w)=1$. Thus, $y$ divides $f_{\nu}$. 
    \begin{enumerate}[i]
        \item Suppose that $I$ is stable with $d=2$. If $y$ is not linear, this reduces to $y=f_{\nu}=x_{t_1}x_{t_2}$. But we already have $x_{t_2}\in J$. This contradicts the minimality of $y$.
        \item Suppose that $I$ is strongly stable with $d\ge 2$. Whence, 
            $\supp(X_{\nu})=\supp_1(f_{\nu})$.
            As $y\ne x_1$ divides $f_{\nu}$, for any $j\in \supp_1(y)\subseteq \supp_1(f_{\nu})$, we have $x_j\in J$. Thus, by the minimality of $y$, we have $y=x_j$, unless $y=x_1^t$ for some $t\ge 1$. In the latter case, as $yf_k=wf_{\nu}$ and $\gcd(y,w)=1$, we will have $f_k\succ f_{\nu}$, which is a contradiction.
    \end{enumerate}
    Therefore, we have shown that $J\subseteq (X_{\nu})$.

    The ``in particular'' part follows from \cite[Proposition 8.2.1]{HH}.
\end{proof}

The following example shows that for ideals generated by elements of degree greater than $2$, the strongly stable condition is necessary.

\begin{Example}
    Let $I$ be the ideal generated by
    \[
    x_1^3, x_1^2 x_2, x_1 x_2^2, x_2^3, x_1^2 x_3, x_1 x_2 x_3, x_2^2 x_3, x_1 x_3^2, x_2 x_3^2, x_3^3, x_1 x_2 x_4, x_3^2 x_4
    \]
    in $R=\QQ[x_1,x_2,x_3,x_4]$. It is not difficult to verify that $I$ is stable, but not strongly stable. The Newton complementary dual of  $I$ does not have linear quotients. Indeed, computation by \texttt{Macaulay2} \cite{M2} suggests that $\widehat{I}^{[\bdN]}$ does not have a linear resolution.
\end{Example}

Next, we examine the $\bda$-duals of edge ideals associated to bipartite graphs. More precisely, we focus on the Newton complementary dual $\widehat{I}^{[\bdN]}$ when $I$ is such an edge ideal; whence, $\bdx^{\bda}=\lcm{G(I)}$. For simplicity, we say Newton-dual instead of Newton complementary dual.  We begin by recalling several definitions and results about edge ideals and graphs, and then consider a connection among squarefree monomial ideals, Alexander Duals and Newton-duals.  

Let $R = \KK[x_1, \ldots, x_n]$ be the polynomial ring on $n$ variables. Suppose that $G$ is a finite simple graph (that is, a graph that does not have loops or multiple edges) with vertices labeled by $x_1, \ldots, x_n$.
The \emph{edge ideal} of $G$, denoted by $I(G)$, is the ideal of $R$ generated by the squarefree monomials $x_ix_j$ such that $\{x_i, x_j\}$ is an edge of $G$.  This gives a one-to-one correspondence between finite simple graphs and squarefree monomial ideals generated in degree $2$. 

The \emph{complement} of a graph $G$, is the graph with identical vertex set such that its edge set contains the edge $\{x_i, x_j\}$ if and only if $\{x_i, x_j\}$ is not an edge of $G$. 

For a subset $\sigma \subseteq [n]$, let $\bdx^\sigma = \displaystyle \prod_{i \in \sigma} x_i$. Note that any squarefree monomial in $\KK[x_1, \ldots, x_n]$ can be written in this way.  Let $\p_\sigma$ be the prime ideal $\p_\sigma = (x_i \mid i \in \sigma)$. For any squarefree monomial ideal $I = (\bdx^{ \sigma_1 },  \ldots, \bdx^{\sigma_r}) \subset \KK[x_1, \ldots, x_n]$, the \emph{Alexander dual} of $I$ is 
$$I^\star = \p_{\sigma_1} \cap \cdots \cap \p_{\sigma_r}.$$ 

In \Cref{AlexDual}, we will study the relation between the Alexander dual and the Newton-dual for bipartite graphs. 
Let $G$ be a bipartite graph with respect to vertex partition $X\sqcup Y$. Let $I=I(G)$ be the edge ideal. To remove isolated vertices, let
\[
X_I=\Set{x\in X\mid \deg_G(x)\ge 1},
\]
and we similarly define $Y_I$. Let $G^c$ be the complement graph of $G|_{X_I\sqcup Y_I}$ with respect to the vertex set $X_I\sqcup Y_I$. We may think of it as the \emph{essential complement} of $G$. 

\begin{Proposition}
    \label{AlexDual}
    Let $G$ be a finite bipartite graph corresponding to the vertex partition $X\sqcup Y$, and $I(G)$ the associated edge ideal in the polynomial ring $R = \KK[x,y\mid x\in X, y\in Y]$. Then 
    \begin{equation}
        \widehat{I(G)}^{[\bdN]}=(I(G^c))^\star.  
        \label{eq-AD}
    \end{equation}
\end{Proposition}

\begin{proof}
  The proof is a case-by-case checking using the elementary ideal intersection properties. 
\end{proof}

%

\begin{Remark}
    \label{not-agree}
    It is not difficult to see that the equality in \eqref{eq-AD} does not hold for general finite simple graphs. On the other hand, the Alexander dual and the complementary dual are closely related, at least for squarefree monomial ideals. The best framework to see this, as pointed out later in the work of Budd and Van Tuyl (\cite[Discussion 3.8]{BV}), is the ideals associated to simplicial complexes: the complementary dual of a squarefree ideal $I$ is the non-face ideal of the Alexander dual of the face complex of $I$.
\end{Remark}

\section{Cellular resolutions for duals of strongly stable ideals}

In this section, the ideal $I$ will always be assumed to be a strongly stable ideal in $R=\KK[x_1,\dots,x_n]$ of degree $d$. Every monomial in $R$ will be identified with its exponent in $\mathbb{N}^n$. By abuse of notation, monomials and their exponents will be treated interchangeably.

Let $m\in G(I)$ be a monomial and suppose that $\sigma\subseteq \supp_1(m)\subseteq [n]$. We write $m\to \sigma$ for the monomial 
\[
m \prod_{i\in \sigma}(x_{i-1}/x_i).
\]
As $I$ is strongly stable, the monomial $m\to \sigma$ still belongs to $G(I)$. We denote by $C(m,\sigma)$ the convex hull of $\Set{m\to \tau |\tau\subseteq \sigma}$ in $\mathbb{R}^n$, and let $X_I=\Set{C(m,\sigma)\mid m\in G(I), \sigma\subseteq \supp_1(m)}$.

\begin{Lemma}
    $X_I$ is a polyhedral cell complex.
    \label{Lemma-XI}
\end{Lemma}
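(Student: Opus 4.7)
The plan is to verify the two defining axioms of a polyhedral cell complex: face closure and the intersection property. The key observation is that each cell $C(m,\sigma)$ is an affine unit cube. Writing $v_i := e_{i-1}-e_i$ (with $e_j$ the standard basis vector of $\RR^n$), we have
\[
C(m,\sigma) \;=\; m + \sum_{i\in\sigma}[0,1]\,v_i,
\]
and since $\{v_i\}_{i=2}^{n}$ is a $\ZZ$-basis for the rank-$(n-1)$ lattice $\{w\in\ZZ^n : \sum_j w_j = 0\}$, reading off $v_i$-coordinates gives an affine isomorphism $\phi$ from the hyperplane $\{|a|_1=d\}$ onto $\RR^{n-1}$ that sends each $C(m,\sigma)$ to an axis-aligned unit cube based at the integer point $\phi(m)$, with edges along the coordinate directions indexed by $\sigma$. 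This reduces the problem to the standard fact about axis-aligned integer unit cubes in $\RR^{n-1}$.

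\emph{Face closure.} The faces of the cube $C(m,\sigma)$ are indexed by pairs of disjoint subsets $\tau_0,\tau_1\subseteq\sigma$, obtained by setting $\lambda_i=0$ for $i\in\tau_0$ and $\lambda_i=1$ for $i\in\tau_1$; the corresponding face is $C(m\to\tau_1,\,\sigma\setminus(\tau_0\cup\tau_1))$. To confirm this lies in $X_I$, strong stability gives $m\to\tau_1\in G(I)$, and for $i\in\sigma\setminus\tau_1$ the exponent of $x_i$ in $m\to\tau_1$ equals $m_i+[i+1\in\tau_1]\ge 1$ (using $i\in\sigma\subseteq\supp_1(m)$, which forces $i\neq 1$ and $m_i\ge 1$), so $\sigma\setminus(\tau_0\cup\tau_1)\subseteq\supp_1(m\to\tau_1)$.

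\emph{Intersection property.} For $P,Q\in X_I$, the images $\phi(P),\phi(Q)$ are axis-aligned unit cubes at integer lattice points of $\RR^{n-1}$. Coordinate by coordinate, the intersection of two integer unit intervals is either empty, a single integer point, or a full unit interval; forming the Cartesian product identifies $\phi(P)\cap\phi(Q)$ as a common face of both cubes, and pulling back by $\phi^{-1}$ yields the conclusion. The main subtlety is precisely this integrality step, which succeeds because $\{v_i\}_{i=2}^{n}$ is a $\ZZ$-basis; otherwise an $\RR$-coordinate from one cube could land in the interior of the other's edge interval, slicing $P$ transversely rather than along a face.
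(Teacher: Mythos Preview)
Your proposal is correct and takes essentially the same approach as the paper: both arguments straighten the cells $C(m,\sigma)$ into axis-aligned integer unit cubes via a linear change of coordinates---the paper uses the map $L(e_i)=\sum_{j\le i}e_j$ on $\RR^n$ (under which $L(v_i)=-e_i$, so $L(C(m,\sigma))$ becomes the cube $L(m)-\sum_{i\in\sigma}[0,1]e_i$), while you work directly in the $\{v_i\}$-basis of the degree-zero lattice to pass to $\RR^{n-1}$. Your verification of face closure (checking that each face $C(m\to\tau_1,\sigma\setminus(\tau_0\cup\tau_1))$ actually lies in $X_I$, in particular that $\sigma\setminus(\tau_0\cup\tau_1)\subseteq\supp_1(m\to\tau_1)$) is more explicit than the paper's, which essentially leaves this point implicit.
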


\begin{proof}
    Write $\bde_1,\dots,\bde_n$ for the canonical bases of $\mathbb{R}^n$, and consider the non-degenerate linear map $L$ sending $\bde_i$ to $\bde_i':=\sum_{j=1}^i\bde_j$. Then, geometrically, the two vectors $L(m\to \sigma)$ and $L(m)$ differ by $\sum_{i\in \sigma}\bde_i$. 
    We observe the following facts.
    \begin{enumerate}
        \item The image $L(C(m,\sigma))$ is an $|\sigma|$-dimensional face of a unit cube in $\mathbb{R}^n$ with all corner vertices being lattice points in $\NN^n$.
        \item $L(m)$ is the unique largest point in $L(C(m,\sigma))$ with respect to the componentwise-comparison partial order in $\mathbb{N}^n$. In particular, $m$ and $\sigma$ uniquely determines $L(C(m,\sigma))$ and in turn $C(m,\sigma)$.
        \item The intersection of two admissible $L(C(m_1,\sigma_1))$ and $L(C(m_2,\sigma_2))$ still takes the form of $L(C(m,\sigma))$. In turn, the intersection of $C(m_1,\sigma_1)$ and $C(m_2,\sigma_2)$ still takes the form $C(m,\sigma)$.
    \end{enumerate}
    Therefore, $X_I$ is a polyhedral cell complex.
\end{proof}

Suppose that $I$ is $\bda$-determined.  To make the desired cellular resolution for $\widehat{I}^{[\bda]}$, we still need some preparations.
\begin{enumerate}[a]
    \item We label this complex. Each $0$-cell corresponding to the monomial $f\in G(I)$ will be labeled by $\widehat{f}=\bdx^{\bda}/f\in R$. Consequently, the face $C(f,\sigma)$ will be labeled by $\widehat{f}\bdx^{\sigma}$ where $\bdx^{\sigma}$ denotes $\prod_{j\in\sigma}x_j$.
    \item We give explicitly the incidence function $\varepsilon$ for $X_I$. Let $F_1=C(f,\sigma)$ be an $|\sigma|$-cell and $F_2$ an $(|\sigma|-1)$-dimensional face of $F_1$.
        \begin{enumerate}[a]
            \item If $F_2$ takes the form $C(f,\tau)$, then we write $\sigma=\Set{i_1<i_2<\cdots<i_k}$ and $\tau=\Set{i_1,\dots,i_{j-1},i_{j+1},\dots,i_k}$. We choose $\varepsilon(F_2,F_1)=(-1)^{j-1}$.
            \item Otherwise, there is a unique face $F_3$ of $F_1$ that is parallel to $F_2$ with the same dimension. We choose $\varepsilon(F_2,F_1)=-\varepsilon(F_3,F_1)$.
        \end{enumerate}
        One checks with ease that $\varepsilon$ is indeed an incidence function.
\end{enumerate}

Now, the labeled $X_I$ gives a cellular complex $\calF_{X_I}$.

\begin{Example} 
    \label{Example-XI}
    Let $I\subseteq \QQ[x_1,x_2,x_3,x_4]$ be the minimal strongly stable ideal that contains the monomial $x_2x_3x_4$. Thus, it has monomial generators
    \begin{align*}
    f_1=x_1^3, f_2=x_1^2x_2, f_3=x_1x_2^2, f_4=x_2^3, f_5=x_1^2x_3, f_6=x_1x_2x_3, f_7=x_2^2x_3, f_8=x_1x_3^2, \\
    f_9=x_2x_3^2, f_{10}=x_1^2x_4, f_{11}=x_1x_2x_4, f_{12}=x_2^2x_4, f_{13}=x_1x_3x_4, f_{14}=x_2x_3x_4.
\end{align*}
    Its Newton-dual is  
   \[
   \widehat{I}^{[\bdN]}=(\widehat{f_1}=x_2^3x_3^2x_4,\widehat{f_2}=x_1x_2^2x_3^2x_4, \dots ,\widehat{f_{14}}=x_1^3x_2^2x_3).
   \]
   The complex $X_I$ can be visualized as in \Cref{BorelXI} and some of the labels are omitted for a better reading. Notice that those $1$-cells (edges) that point north-eastward, north-westward or southward, are labeled as $\widehat{f_i}x_2$, $\widehat{f_i}x_3$ or $\widehat{f_i}x_4$, respectively. Similarly, all the $2$-cells (squares) are labeled as $\widehat{f_i}x_3x_4$, $\widehat{f_i}x_2x_3$ or $\widehat{f_i}x_2x_4$, depending on the position of the $2$-cell. Finally the only $3$-cell (cube) is labeled as $\widehat{f_{14}}x_2x_3x_4=\lcm(G(I))$ since we take $\bda=\lcm(G(I))$. One can easily read from the picture that there are $14$ vertices, $21$ edges, $9$ squares and $1$ cube. Meanwhile, one can check with \texttt{Macaulay2} \cite{M2} that the Betti numbers of $\widehat{I}^{[\bdN]}$ are $14$, $21$, $9$ and $1$ respectively.

     \begin{figure}[h]
        \begin{center}
            \begin{tikzpicture} [thick, scale=1.4, every node/.style={scale=0.8}]]
                \shade [shading=ball, ball color=black]  (0,0) circle (.04) node [above left] {\scriptsize$\widehat{f_1}$};
                \shade [shading=ball, ball color=black]  (-1,-1) circle (.04) node [above left] {\scriptsize$\widehat{f_2}$};
                \shade [shading=ball, ball color=black]  (-2,-2) circle (.04) node [above left] {\scriptsize$\widehat{f_3}$};
                \shade [shading=ball, ball color=black]  (-3,-3) circle (.04) node [above left] {\scriptsize$\widehat{f_4}$};
                \shade [shading=ball, ball color=black]  (0.2,-1.2) circle (.04) node [above right] {\scriptsize$\widehat{f_5}$};
                \shade [shading=ball, ball color=black]  (-0.8,-2.2) circle (.04) node [above right] {\scriptsize$\widehat{f_6}$};
                \shade [shading=ball, ball color=black]  (-1.8,-3.2) circle (.04) node [below left] {\scriptsize$\widehat{f_7}$};
                \shade [shading=ball, ball color=black]  (0.4,-2.4) circle (.04) node [above right] {\scriptsize$\widehat{f_8}$};
                \shade [shading=ball, ball color=black]  (-0.6,-3.4) circle (.04) node [below right] {\scriptsize$\widehat{f_9}$};
                \shade [shading=ball, ball color=black]  (0.2,-0.1) circle (.04) node [above right] {\scriptsize$\widehat{f_{10}}$};
                \shade [shading=ball, ball color=black]  (-0.8,-1.1) circle (.04) node [below right] {\scriptsize$\widehat{f_{11}}$};
                \shade [shading=ball, ball color=black]  (-1.8,-2.1) circle (.04) node [below right] {\scriptsize$\widehat{f_{12}}$};
                \shade [shading=ball, ball color=black]  (0.4,-1.3) circle (.04) node [above right] {\scriptsize$\widehat{f_{13}}$};
                \shade [shading=ball, ball color=black]  (-0.6,-2.3) circle (.04) node [below left] {\scriptsize$\widehat{f_{14}}$};

                \draw[thick, directed] (-1,-1) -- (0,0) node[above left, pos=0.5] {\scriptsize$\widehat{f_{2}}x_2$};
                \draw[thick, directed] (-2,-2) -- (-1,-1) node[above left, pos=0.5] {\scriptsize$\widehat{f_{3}}x_2$};
                \draw[thick, directed] (-3,-3) -- (-2,-2) node[above left, pos=0.5] {\scriptsize$\widehat{f_{4}}x_2$};
                \draw[thick, dotted,  directed] (-0.8,-2.2) -- (0.2,-1.2);
                \draw[thick, dotted,directed] (-1.8,-3.2) -- (-0.8,-2.2);
                \draw[thick, directed] (-0.6,-3.4) -- (0.4,-2.4) node[below right, pos=0.5] {\scriptsize$\widehat{f_{9}}x_2$};
                \draw[thick, dotted, directed] (0.2,-1.2) -- (-1,-1);
                \draw[thick, dotted, directed] (-0.8,-2.2) -- (-2,-2);
                \draw[thick, directed] (-1.8,-3.2) -- (-3,-3) node[below, pos=0.5] {\scriptsize$\widehat{f_{7}}x_3$};
                \draw[thick, dotted, directed] (0.4,-2.4) -- (-0.8,-2.2);
                \draw[thick, directed] (-0.6,-3.4) -- (-1.8,-3.2) node[below, pos=0.5] {\scriptsize$\widehat{f_{9}}x_3$};
                \draw[thick, directed] (-1.8,-2.1) -- (-0.8,-1.1) node[right, pos=0.3] {\scriptsize$\widehat{f_{12}}x_2$};
                \draw[thick, directed] (-1.8,-2.1) -- (-1.8,-3.2) node[left, pos=0.5] {\scriptsize$\widehat{f_{12}}x_4$};

                \draw[thick, directed] (-1.8,-2.1) -- (-1.8,-3.2);
                \draw[thick, directed] (-0.6,-2.3) -- (0.4,-1.3) node[right, pos=0.4] {\scriptsize$\widehat{f_{14}}x_2$};
                \draw[thick, directed] (-0.6,-2.3) -- (-0.6,-3.4) node[right, pos=0.3] {\scriptsize$\widehat{f_{14}}x_4$};
                \draw[thick, directed] (-0.6,-2.3) -- (-1.8,-2.1);
                \draw[thick, dotted,  directed] (-0.8,-1.1) -- (-0.8,-2.2);
                \draw[thick, directed] (-0.8,-1.1) -- (0.2,-0.1);
                \draw[thick, directed] (0.4,-1.3) -- (-0.8,-1.1);
                \draw[thick, directed] (0.2,-0.1) -- (0.2,-1.2) node[right, pos=0.4] {\scriptsize$\widehat{f_{10}}x_4$};
                \draw[thick, directed] (0.4,-1.3) -- (0.4,-2.4) node[right, pos=0.5] {\scriptsize$\widehat{f_{13}}x_4$};
                
                \path[fill=black,fill opacity=0.2]    (-0.8,-1.1)--(-1.8,-2.1)--(-1.8,-3.2)--(-0.6,-3.4)--(0.4,-2.4)--(0.4,-1.3)--cycle;
            \end{tikzpicture}
            \caption{$X_I$} \label{BorelXI}
        \end{center}
    \end{figure}
\end{Example}

Partially following the convention in \cite[Section 26]{P}, we write $R(\omega,\bdx^{\bdalpha})$ for the free $R$-module with one generator in multidegree $\bdx^{\bdalpha}$, that corresponds to the cell $\omega \in X_I$.

\begin{Theorem}
    \label{Borel}
    Let $I$ be an $\bda$-determined strongly stable ideal in $R=\KK[x_1,\dots,x_n]$, generated in degree $d$.
    Then the complex $\calF_{X_I}$ provides a minimal free resolution for $R/\widehat{I}^{[\bda]}$.
\end{Theorem}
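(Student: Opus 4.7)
The plan is to prove the theorem by iterated mapping cones, matching the linear-quotient presentation of $\widehat{I}^{[\bda]}$ established in Theorem \ref{LinQuo}. I list the minimal generators of $I$ co-lexicographically as $f_1\prec\cdots\prec f_\nu$, and set $I_k:=(f_1,\ldots,f_k)$. By the lemma preceding Theorem \ref{LinQuo}, each $I_k$ is again a strongly stable, $\bda$-determined ideal generated in degree $d$. The strategy is to show by induction on $k$ that $\mathcal{F}_{X_{I_k}}$ is a minimal free resolution of $R/\widehat{I_k}^{[\bda]}$; the case $k=\nu$ is the claim. The base case $k=1$ is immediate because strong stability forces $f_1=x_1^d$ (so $\supp_1(f_1)=\varnothing$), whence $X_{I_1}$ consists of a single vertex labeled $\widehat{f_1}$ and $\mathcal{F}_{X_{I_1}}$ is just the two-term complex $0\to R(-\widehat{f_1})\xrightarrow{\cdot\widehat{f_1}} R$.

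For the inductive step, the uniqueness clause in Lemma \ref{Lemma-XI} shows that the cells of $X_{I_k}$ decompose as the cells of $X_{I_{k-1}}$ together with exactly the new cells $C(f_k,\sigma)$ for $\sigma\subseteq\supp_1(f_k)$, each labeled by $\widehat{f_k}\bdx^\sigma$. The facets of such a new cell split into two types: the ``internal'' facets $C(f_k,\tau)$ with $\tau=\sigma\setminus\{i_j\}$, again new, which carry label-ratio $x_{i_j}$ with incidence sign $(-1)^{j-1}$; and the ``external'' facets $C(f_k\to\{i\},\sigma\setminus\{i\})$ for $i\in\sigma$, which already lie in $X_{I_{k-1}}$. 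Reading only the internal facets, the new cells with labels $\widehat{f_k}\bdx^\sigma$ assemble into the Koszul complex on the variables $X_k:=\{x_j:j\in\supp_1(f_k)\}$ shifted by $\widehat{f_k}$; and the strongly-stable case in the proof of Theorem \ref{LinQuo} identifies $X_k$ as exactly the generating set of the colon ideal $\widehat{I_{k-1}}^{[\bda]}:\widehat{f_k}$, so this shifted Koszul complex resolves the corresponding quotient. The external facets then give a map $\varphi$ from the shifted Koszul complex into $\mathcal{F}_{X_{I_{k-1}}}$: the label-ratio at an external facet is $x_{i-1}$, forced by the identity $\widehat{f_k}\,x_i=x_{i-1}\,\widehat{f_k\to\{i\}}$, and at homological degree $0$ the map sends $1\mapsto\widehat{f_k}$. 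Because $\partial^2=0$ in $\mathcal{F}_{X_{I_k}}$ (by the incidence-function axioms), $\varphi$ is automatically a chain map lifting $\cdot\widehat{f_k}:R/(X_k)\to R/\widehat{I_{k-1}}^{[\bda]}$. Thus $\mathcal{F}_{X_{I_k}}$ is the mapping cone of $\varphi$, and by Definition \ref{mapping-cone} it resolves $R/\widehat{I_k}^{[\bda]}$, completing the induction.

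Minimality is immediate: every nonzero entry of every differential in $\mathcal{F}_{X_I}$ is a ratio of labels of adjacent cells, hence a single variable of positive degree, so no unit entries occur. The expected main obstacle is the orientation and sign bookkeeping required to reconcile the prescribed incidence function $\varepsilon$ with the signs $-d+\varphi$ in the mapping-cone formula of Definition \ref{mapping-cone}; this will presumably amount to tensoring the identification between new cells and Koszul generators by a factor $(-1)^{|\sigma|}$ at each level. As a fallback, one could instead verify the Bayer--Sturmfels criterion (Lemma \ref{BayerSturmfels}) by producing an explicit contraction of each subcomplex $(X_I)_{\le\bdbeta}$ onto a vertex, exploiting the cubical structure of the cells $C(f,\sigma)$; however, the mapping-cone route aligns better with the linear-quotient framework already in hand.
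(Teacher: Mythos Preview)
Your proposal is correct and follows essentially the same route as the paper: both argue by induction along the co-lexicographic order, identify the new cells $C(f_k,\sigma)$ for $\sigma\subseteq\supp_1(f_k)$ as a shifted Koszul complex resolving $R/(\widehat{I_{k-1}}^{[\bda]}:\widehat{f_k})$ via Theorem~\ref{LinQuo}, and recognize $\mathcal{F}_{X_{I_k}}$ as the mapping cone of the comparison map into $\mathcal{F}_{X_{I_{k-1}}}$ supplied by the external facets. Your sign remark is exactly the point the paper addresses by passing to $-\mathbf{K}'[1]$ before building the comparison map, and your observation that $\partial^2=0$ in $\mathcal{F}_{X_{I_k}}$ already forces $\varphi$ to be a chain map is a clean way to bypass the explicit commutativity check the paper carries out.
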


\begin{proof}
    Let $G(I)=\Set{f_1\prec f_2\prec \cdots \prec f_{\nu}}$ and $I'=(f_1,\dots,f_{\nu-1})$, as in the proof for \Cref{LinQuo}. We prove by induction on the index $\nu$. Notice that $\Set{C(f_{\nu},\sigma)|\sigma \subseteq \supp_1(f_{\nu})}$ are the extra cells which $f_{\nu}$ contributes to $X_{I}$ compared to $X_{I'}$. Meanwhile, all other faces in $C(f_{i},\supp_1(f_{i}))$ for $i<\nu$ already lie in $X_{I'}$.

    Suppose that $\supp_1(f_{\nu})=\Set{{i_1}<i_2<\cdots<i_k}$. The Koszul complex $\bdK=\bdK(x_{i_1},\dots,x_{i_k})$ provides a multigraded minimal free resolution for $R/(x_{i_1},\dots,x_{i_k})$. Notice that 
    $K_j=\bigoplus_{\sigma} R(\bdx^{\sigma})$,
    where $\sigma$ ranges over all subsets of $\supp_1(f_{\nu})$ with $|\sigma|=j$. The restriction of the differential map $d_j:K_j\to K_{j-1}$ to the direct summands  $R(\bdx^{\sigma})$ and $R(\bdx^{\tau})$ is the multiplication by $(-1)^{t-1} x_{w_t}$, when $\sigma=\Set{w_1<w_2<\cdots<w_j}$ and $\tau=\Set{w_1<\cdots<w_{t-1}<w_{t+1}<\cdots<w_j}$; otherwise, it is $0$. 

    Now we shift the internal multigrading of $\bdK$ by $\widehat{f_{\nu}}=\bdx^{\bda}/f_{\nu}$ to get $\bdK'$. Thus, above $K_j$ is modified to $K_j'=\bigoplus_{\sigma} R(\bdx^{\sigma}\widehat{f_{\nu}})$. Notice that $R(\bdx^{\sigma}\widehat{f_{\nu}})$ corresponds to $R(C(f_{\nu},\sigma),\bdx^{\sigma}\widehat{f_\nu})$ in $\calF_{X_I}$.  And the differential maps agrees with the expected differentials in $\calF_{X_I}$ up to a shift.

    Next, we consider the comparison map $\varphi$ from $-\bdK'[1]$ to $\calF_{X_{I'}}$.
    Here, $[1]$ indicates a shift in homological degree, and the negative sign means replacing each differential map by its negative.
    For $\varphi_j:K'_{j+1}\to \calF(X_{I'})_j$, we look at the restriction to the direct summands $R(C(f_{\nu},\sigma),\bdx^{\sigma}\widehat{f_{\nu}})$ and $R(C(f',\tau),\bdx^{\tau}\widehat{f'})$. Here, $|\tau|+1=|\sigma|=j$.
    \begin{enumerate}[a]
        \item If $C(f',\tau)$ is not a face of $C(f_{\nu},\sigma)$, this restriction is $0$.
        \item Otherwise, $\tau\subset \sigma$. Write $\omega=\sigma\setminus\tau$. Then $f'$ is $f_{\nu}\to \omega$. Now, we define the restriction of $\varphi_j$ here to be the multiplication by  $\varepsilon(C(f_{\nu}\to \omega,\tau),C(f_{\nu},\sigma))\cdot x_\omega$, which agrees with the corresponding differential map in $\calF_{I_X}$.
    \end{enumerate}
    To verify that $\varphi$ is a valid comparison map, i.e., the corresponding diagrams commute, it suffices to notice that $\varepsilon$ is an incidence function, as we have altered the signs of the differential maps to get $-\bdK'[1]$. 

    Finally, the mapping cone of $\varphi$ gives a free resolution for $R/(\widehat{I'}^{[\bda]},\widehat{f_{\nu}})=R/\widehat{I}^{[\bda]}$, as $\widehat{I'}^{[\bda]}:\widehat{f_{\nu}}=\supp_1(f_{\nu})$ by \Cref{LinQuo}. This resolution is minimal by checking the differentials directly. It also agrees with the expected polyhedral cell resolution $\calF_{X_I}$.
\end{proof}

\begin{remark}
    \label{Betti}
    With assumptions as in \Cref{Borel}, let $G(I)=\Set{f_1,f_2,\ldots ,f_\nu}$. If we write $r_k=|\supp_1(f_k)|$ for each $k\in[\nu]$, then
    \[
    \beta_i\left( \widehat{I}^{[\bda]} \right) = \sum_{k=1}^{\nu} \binom{r_k}{i}
    \]
    by \Cref{LinQuo} and \cite[Corollary 8.2.2]{HH}. In particular, $\pd(\widehat{I}^{[\bda]})=\max\Set{r_1,r_2,\dots,r_\nu}$.
\end{remark}

\section{Resolutions of duals of stable ideals of degree 2}
In this section, the base ideal we consider is a type of monomial ideal of degree $2$ that generalizes the concept of stable ideal.  
Partially following \cite[Definition 2.1]{NR}, we will call  the set of lattice points
\[
\Set{(i,j)\in \ZZ_+^2\mid 1\le i\le j}
\]
 the \emph{shifted quadrant}. Within this framework, a \emph{shifted partition} 
\[
\lambda-\mu:=(\lambda_1,\lambda_2,\dots,\lambda_h;\;\mu_1,\mu_2,\dots,\mu_h)
\]
will satisfy 
$i-1\le \mu_i< \lambda_i$ for $i\in [h]$. 
Corresponding to this shifted partition, one has a \emph{shifted quasi-Ferrers diagram}
\[
D_{\lambda-\mu}=\Set{(i,j)\in \ZZ_+^2\mid \mu_i<j\le \lambda_i}
\]
that lies entirely in the shifted quadrant. Note that $\lambda_i$ does  not need to be greater than $\lambda_j$ when $i<j$ as in the classical Ferrers diagram (see \Cref{quasi-diag-exam}). 

The operation $c_1=(i_1,j_1)\to c_2=(i_2,j_2)$ within the shifted quadrant is called a \emph{quasi-Borel move} if either $i_1=i_2$ or $j_1=j_2$. The \emph{geometric length} of this move is simply the Euclidean distance between these two lattice points; in this case, it is $\max(|i_1-i_2|,|j_1-j_2|)$. A quasi-Borel move $c\to c'$ is \emph{minimal} (with respect to the given shifted quasi-Ferrers diagram $D$) if there is no other lattice point in $D$ that lies on the line segment from $c$ to $c'$. Of course, $c\to c'$ is a minimal quasi-Borel move if and only if $c'\to c$ is so.

A \emph{quasi-Borel walk} of length $t$ (with respect to $D$) is a concatenation of minimal $t$ quasi-Borel moves: $c_0\to c_1 \to \cdots \to c_t$ with $c_i\in D$. 
The shifted quasi-Ferrers diagram $D$ is called \emph{connected} if for every pair of lattice points $c,c'\in D$, there is a quasi-Borel walk $c=c_0\to c_1 \to \cdots \to c_t=c'$, such that each quasi-Borel move has geometric length $1$.  
Since each row of the shifted quasi-Ferrers diagram $D$ is connected,
this condition simply means for each $i\in[h-1]$, one has
\[
\Set{j\in \ZZ_+\mid \mu_i<j\le \lambda_i} \cap
\Set{j\in \ZZ_+\mid \mu_{i+1}<j\le \lambda_{i+1}} \ne \varnothing.
\]

Notice that each lattice point $(i,j)$ in the shifted quadrant corresponds uniquely to the monomial $x_ix_j$ of degree $2$ in the polynomial ring $R=\KK[x_1,\dots,x_n]$ whenever $j\le n$.
By abuse of notation, we also call the map $\sigma$ sending the lattice point $(i,j)$ in the shifted quadrant to $x_ix_j$ the specialization map; it obviously bears a similar flavor as that in \Cref{Spec}. A monomial ideal $I$ of degree $2$ is called \emph{shifted stable} if
\[
I=I_{\lambda-\mu}:=(\sigma(t)\mid t\in D_{\lambda-\mu})
\]
for some connected quasi-Ferrers diagram $D_{\lambda-\mu}$. It is easy to verify that stable ideals are shifted stable with respect to some connected quasi-Ferrers diagram $D_{\lambda-\mu}$, where $\mu$ takes the form $(0,1,\dots,h-1)$. 

\begin{Example}
    \label{quasi-diag-exam}
    In \Cref{Quasi-Diagram}, we have a connected shifted quasi-Ferrers diagram $D_{\lambda-\mu}$ with $\lambda=(6,5,6,7)$ and $\mu=(1,3,2,3)$. The corresponding shifted stable ideal is
    \[
    I=(x_1x_2,x_1x_3,x_1x_4,x_1x_5,x_1x_6,x_2x_4,x_2x_5,x_3x_4,x_3^2,x_3x_5,x_3x_6,x_4^2,x_4x_5,x_4x_6,x_4x_7).
    \]
    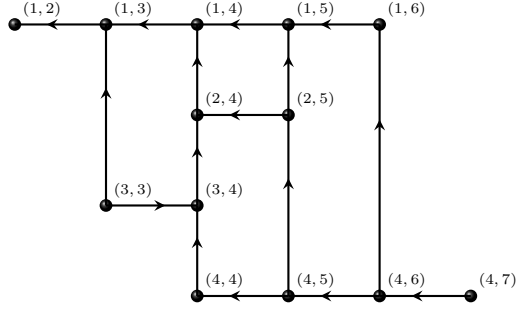
\begin{figure}[h]
        \begin{center}
            \begin{tikzpicture} [thick, scale=1.2, every node/.style={scale=0.8}]]
                \shade [shading=ball, ball color=black]  (2,-1) circle (.07) node [above right] {\scriptsize$(1,2)$};
                \shade [shading=ball, ball color=black]  (3,-1) circle (.07) node [above right] {\scriptsize$(1,3)$};
                \shade [shading=ball, ball color=black]  (3,-3) circle (.07) node [above right] {\scriptsize$(3,3)$};
                \shade [shading=ball, ball color=black]  (4,-1) circle (.07) node [above right] {\scriptsize$(1,4)$};
                \shade [shading=ball, ball color=black]  (4,-2) circle (.07) node [above right] {\scriptsize$(2,4)$};
                \shade [shading=ball, ball color=black]  (4,-3) circle (.07) node [above right] {\scriptsize$(3,4)$};
                \shade [shading=ball, ball color=black]  (4,-4) circle (.07) node [above right] {\scriptsize$(4,4)$};
                \shade [shading=ball, ball color=black]  (5,-1) circle (.07) node [above right] {\scriptsize$(1,5)$};
                \shade [shading=ball, ball color=black]  (5,-2) circle (.07) node [above right] {\scriptsize$(2,5)$};
                \shade [shading=ball, ball color=black]  (5,-3) circle (.07) node [above right] {\scriptsize$(3,5)$};
                \shade [shading=ball, ball color=black]  (5,-4) circle (.07) node [above right] {\scriptsize$(4,5)$};
                \shade [shading=ball, ball color=black]  (6,-1) circle (.07) node [above right] {\scriptsize$(1,6)$};
                \shade [shading=ball, ball color=black]  (6,-3) circle (.07) node [above right] {\scriptsize$(3,6)$};
                \shade [shading=ball, ball color=black]  (6,-4) circle (.07) node [above right] {\scriptsize$(4,6)$};
                \shade [shading=ball, ball color=black]  (7,-4) circle (.07) node [above right] {\scriptsize$(4,7)$};

                \draw[thick, directed] (7,-4) -- (6,-4);
                \draw[thick, directed] (6,-4) -- (5,-4);
                \draw[thick, directed] (5,-4) -- (4,-4);
                \draw[thick, directed] (4,-4) -- (4,-3);
                \draw[thick, directed] (4,-3) -- (4,-2);
                \draw[thick, directed] (4,-2) -- (4,-1);
                \draw[thick, directed] (3,-3) -- (4,-3);
                \draw[thick, directed] (3,-3) -- (3,-1);
                \draw[thick, directed] (4,-1) -- (3,-1);
                \draw[thick, directed] (3,-1) -- (2,-1);
                \draw[thick, directed] (5,-4) -- (5,-3);
                \draw[thick, directed] (5,-3) -- (5,-2);
                \draw[thick, directed] (5,-3) -- (4,-3);
                \draw[thick, directed] (5,-2) -- (4,-2);
                \draw[thick, directed] (5,-2) -- (5,-1);
                \draw[thick, directed] (6,-1) -- (5,-1);
                \draw[thick, directed] (5,-1) -- (4,-1);
                \draw[thick, directed] (6,-3) -- (6,-1);
                \draw[thick, directed] (6,-3) -- (5,-3);
                \draw[thick, directed] (6,-4) -- (6,-3);
            \end{tikzpicture}
            \caption{$\lambda-\mu=(6,5,6,7;1,3,2,3)$ with minimal quasi-Borel moves} \label{Quasi-Diagram}
        \end{center}
    \end{figure}
\end{Example}

\begin{Lemma}
    \label{shifted-stable-induction}
    Let $D_{\lambda-\mu}$ be a connected shifted quasi-Ferrers diagram containing more than one lattice point. Then, we can remove one such point from $D_{\lambda-\mu}$ and still get a connected shifted quasi-Ferrers diagram $D'$.    
\end{Lemma}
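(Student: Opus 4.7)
The plan is to exhibit a removable lattice point by focusing on the last row of $D = D_{\lambda-\mu}$. Writing $h$ for the number of rows, the argument splits into three cases according to $h$ and the size of row $h$.

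If $h = 1$, the diagram is a single row containing at least two points, so $\lambda_1 \ge \mu_1 + 2$. I would simply remove the right endpoint $(1, \lambda_1)$; the shifted-partition inequality $0 \le \mu_1 < \lambda_1 - 1$ still holds, and connectedness is vacuous.

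If $h \ge 2$ and row $h$ has exactly one point (i.e.\ $\lambda_h = \mu_h + 1$), I would delete the entire row $h$. The remaining diagram has $h-1$ rows, and both the shifted-partition inequalities $i - 1 \le \mu_i < \lambda_i$ for $i \in [h-1]$ and the connectedness conditions between consecutive pairs of rows are inherited directly from $D$.

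If $h \ge 2$ and row $h$ has at least two points ($\lambda_h \ge \mu_h + 2$), the strategy is to attempt removing one of the endpoints $(h, \lambda_h)$ or $(h, \mu_h + 1)$ of row $h$, and argue that at least one works. Both clearly preserve the shifted-partition inequalities (for the second one, use $\mu_h + 1 \ge h - 1$ and $\mu_h + 1 < \lambda_h$), so only the connectedness between rows $h - 1$ and the modified row $h$ is at issue. The main obstacle — really the only nontrivial piece of the argument — is handling the bad configuration in which removing $(h, \lambda_h)$ destroys this connectedness. In that case the original overlap $\{\mu_{h-1}+1,\dots,\lambda_{h-1}\} \cap \{\mu_h+1,\dots,\lambda_h\}$ must consist of the single integer $\lambda_h$, which forces the inequalities $\mu_{h-1} + 1 \le \lambda_h \le \lambda_{h-1}$. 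But then $\lambda_h$ still lies in $\{\mu_{h-1}+1,\dots,\lambda_{h-1}\} \cap \{\mu_h+2,\dots,\lambda_h\}$, using $\lambda_h \ge \mu_h + 2$, so removing the other endpoint $(h, \mu_h + 1)$ preserves connectedness. This finishes the case analysis and hence the lemma.
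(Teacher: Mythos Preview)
Your proof is correct and follows essentially the same approach as the paper. Both arguments focus on the last row and split into the same three cases ($h=1$; $h\ge 2$ with a singleton last row; $h\ge 2$ with at least two points in the last row); the only cosmetic difference is that the paper introduces the index $t=\min$ of the overlap between rows $h-1$ and $h$ to decide upfront which endpoint to remove, whereas you try the right endpoint first and fall back to the left one precisely when the overlap is $\{\lambda_h\}$, which is exactly the paper's case $\lambda_h=t$.
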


\begin{proof}
    We may assume that $\lambda-\mu=(\lambda_1,\lambda_2,\dots,\lambda_h;\;\mu_1,\mu_2,\dots,\mu_h)$.
    If $h=1$, we will remove the rightmost point $c=(1,\lambda_1)$ and the statement is clear. 
    In the following, we will assume that $h\ge 2$.
    As $D_{\lambda-\mu}$ is connected, we will consider the index
    \[
    t:=\min \left( \Set{j\in \ZZ_+\mid \mu_{h-1}<j\le \lambda_{h-1}} \cap
    \Set{j\in \ZZ_+\mid \mu_{h}<j\le \lambda_{h}} \right).
    \]
    \begin{enumerate}[a]
        \item If $\mu_h=t-1$ and $\lambda_h=t$, we will remove the only lattice point $c=(h,t)$ on the $h$-th row. In this case, we will write $\lambda'=(\lambda_1,\dots,\lambda_{h-1})$ and $\mu'=(\mu_1,\dots,\mu_{h-1})$.
        \item \label{induction-right} If $\lambda_h>t$, we will remove the rightmost lattice point $c=(h,\lambda_h)$ on the $h$-th row. In this case, we will write $\lambda'=(\lambda_1,\dots,\lambda_{h-1},\lambda_h-1)$ and $\mu'=\mu$.
        \item \label{induction-left} If $\mu_h+1<t=\lambda_h$, we will remove the leftmost lattice point $c=(h,\mu_h+1)$ on the $h$-th row. In this case, we will write $\lambda'=\lambda$ and $\mu'=(\mu_1,\dots,\mu_{h-1},\mu_h+1)$.
    \end{enumerate}
    Now, the remaining diagram will be $D_{\lambda'-\mu'}$. It is easy to see that $D_{\lambda'-\mu'}$ is still a connected shifted quasi-Ferrers diagram.
\end{proof}

\begin{Remark}
    \label{rmk:linear-order}
	A recursive application of \Cref{shifted-stable-induction} gives a linear ordering $\prec$ of the lattice points in $D_{\lambda-\mu}$. This can be made more explicit. 
    \begin{enumerate}[a]
        \item If $(i_1,j_1)$ and $(i_2,j_2)$ belong to $D_{\lambda-\mu}$ with
            $i_1<i_2$, we require $(i_1,j_1)\prec (i_2,j_2)$.
        \item Define     
            \[
                t_k\coloneqq \min \left( \Set{j\in \ZZ_+\mid \mu_{k-1}<j\le \lambda_{k-1}} \cap
                    \Set{j\in \ZZ_+\mid \mu_{k}<j\le \lambda_{k}} \right)
            \]
            for $2\le k \le h$ and make $t_1=1$.  Then, the lattice points of the same horizontal level are organized as
            \[
                (k,t_k)\prec \underbrace{(k,t_k-1)\prec \cdots \prec (k,\mu_k+1)}_{\text{this part exists only when $\mu_k+1<t_k$}}\prec \underbrace{(k,t_k+1) \prec \cdots \prec(k,\lambda_k)}_{\text{this part exists only when $\lambda_k>t_k$}}.
            \]
    \end{enumerate}
    For instance, the lattice points in \Cref{Quasi-Diagram} are organized as
    \begin{align*}
        (1,2)& \prec (1,3) \prec (1,4) \prec (1,5) \prec (1,6) \prec (2,4) \prec (2,5) \\
        &\prec (3,4) \prec (3,3) \prec (3,5) \prec (3,6) \prec (4,4) \prec (4,5) \prec (4,6) \prec (4,7).
    \end{align*}
\end{Remark}

Taking the specialization, we immediately get the following corollary.

\begin{Corollary}
    \label{Linear-Order}
    Let $I$ be a shifted stable ideal. Then, there is a linear order on the minimal monomial generators of $I$, $f_1\prec \cdots \prec f_\nu$, such that the sub-ideal $(f_1,\dots,f_k)$ is still shifted stable for each $k\in[\nu]$.
\end{Corollary}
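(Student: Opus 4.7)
The plan is to derive the corollary by inductively peeling off one lattice point at a time from the shifted quasi-Ferrers diagram, using Lemma \ref{shifted-stable-induction} as the inductive step, and then translating the resulting chain of subdiagrams to ideals via specialization.

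First I would note that the specialization map $\sigma$ is injective on the shifted quadrant: a lattice point $(i,j)$ with $i\le j$ is sent to $x_ix_j$, and this monomial determines $(i,j)$ uniquely since $i=\min\supp(x_ix_j)$ and $j=\max\supp(x_ix_j)$. In particular, if $D_{\lambda-\mu}$ is the connected shifted quasi-Ferrers diagram giving $I=I_{\lambda-\mu}$, then the lattice points of $D_{\lambda-\mu}$ are in bijection with the minimal monomial generators of $I$, so $\nu=|D_{\lambda-\mu}|$.

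Next I would proceed by induction on $\nu$. The base case $\nu=1$ is immediate. For the inductive step, assuming $\nu\ge 2$, Lemma \ref{shifted-stable-induction} guarantees that some lattice point $c\in D_{\lambda-\mu}$ can be removed so that the remaining diagram $D'=D_{\lambda-\mu}\setminus\{c\}$ is still a connected shifted quasi-Ferrers diagram. Define $f_\nu:=\sigma(c)$ and let $I':=\sigma(D')=I_{\lambda'-\mu'}$, which is a shifted stable ideal with $\nu-1$ minimal generators. By the induction hypothesis applied to $I'$, there is a linear order $f_1\prec\cdots\prec f_{\nu-1}$ on $G(I')$ such that $(f_1,\dots,f_k)$ is shifted stable for each $k\in[\nu-1]$. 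Appending $f_\nu$ at the end produces the desired linear order on $G(I)$: for $k<\nu$ the initial segment is shifted stable by the inductive hypothesis, while $(f_1,\dots,f_\nu)=\sigma(D_{\lambda-\mu})=I$ is shifted stable by hypothesis.

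There is no real obstacle here beyond confirming that the specialization map is injective on the shifted quadrant (so that removing one lattice point removes exactly one minimal generator and does not collapse two different generators together) and invoking Lemma \ref{shifted-stable-induction} repeatedly. The content of the corollary is simply a translation of that lemma from diagrams to ideals via $\sigma$.
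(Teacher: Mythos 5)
Your proof is correct and follows the paper's intended argument exactly: the paper derives the corollary "immediately" from Lemma \ref{shifted-stable-induction} by specialization, which is precisely the induction you spell out (peel off one lattice point at a time and transfer to generators via $\sigma$). Your added remark that $\sigma$ is injective on the shifted quadrant is a sensible explicit justification of a point the paper leaves implicit.
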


Notice that linear order in the above result is far from unique. However, for our argument below, we will always assume that it is derived from \Cref{rmk:linear-order}.  The order of the minimal monomial generators in \Cref{quasi-diag-exam} satisfies this requirement. Notice that it is in general not the co-lexicographic order considered in Section 4.

\begin{Proposition}
    \label{shifted-stable-linear-quot}
    If $I$ is an $\bda$-determined shifted stable ideal of the polynomial ring $R$, then the generalized Newton complement dual $\widehat{I}^{[\bda]}$ has linear quotients.
\end{Proposition}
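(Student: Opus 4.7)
The plan is to adapt the strategy of Theorem~\ref{LinQuo}, replacing the co-lexicographic order by the linear order supplied by Corollary~\ref{Linear-Order}. I would write $G(I)=\{f_1\prec\cdots\prec f_\nu\}$ in the order produced by the inductive proof of Lemma~\ref{shifted-stable-induction}, so that each $I_k:=(f_1,\dots,f_k)$ remains shifted stable and the step from $I_k$ to $I_{k-1}$ corresponds to deleting one of the three distinguished lattice points of $D_{\lambda-\mu}$ described there. It then suffices to show that for each $k\ge 2$, the colon ideal
\[
J_k:=(\widehat{f_1},\dots,\widehat{f_{k-1}}):\widehat{f_k}
\]
is generated by variables.

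Using the standard monomial colon formula together with the identity $\widehat{f_j}/\gcd(\widehat{f_j},\widehat{f_k})=\lcm(f_j,f_k)/f_j=f_k/\gcd(f_j,f_k)$, one rewrites $J_k=(f_k/\gcd(f_j,f_k)\mid j<k)$. Writing $f_k=x_hx_{j_k}$ for the removed lattice point $(h,j_k)$ with $h\le j_k$, each generator of $J_k$ is either $1$, a single variable in $\{x_h,x_{j_k}\}$, or the monomial $x_hx_{j_k}$ itself, according as $f_j$ shares $0$, $1$, or $2$ variables with $f_k$. The only obstruction to $J_k$ being generated by variables is therefore the appearance of the degree-two generator $x_hx_{j_k}$ while neither $x_h$ nor $x_{j_k}$ is present. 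I rule this obstruction out by exhibiting, in each of the three removal cases of Lemma~\ref{shifted-stable-induction}, a neighbour $f_j\in I_{k-1}$ sharing exactly one variable with $f_k$: in case (a) connectivity forces $(h-1,j_k)\in D_{\lambda-\mu}$, giving $x_h\in J_k$; in case (\ref{induction-right}) the inequality $\lambda_h>t\ge\mu_h+1$ places $(h,\lambda_h-1)$ in the truncated diagram, yielding $x_{j_k}=x_{\lambda_h}\in J_k$; and in case (\ref{induction-left}) the inequality $\mu_h+1<\lambda_h$ places $(h,\mu_h+2)$ there, yielding $x_{j_k}=x_{\mu_h+1}\in J_k$. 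Since the potentially non-linear generator $x_hx_{j_k}$ is then redundant, $J_k$ is generated by a subset of $\{x_h,x_{j_k}\}$.

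The main obstacle I anticipate is precisely this case analysis: it is purely combinatorial, but verifying that the chosen neighbour lies in the truncated diagram in all three sub-cases rests squarely on the connectivity of $D_{\lambda-\mu}$ together with the inequalities that define the sub-cases in Lemma~\ref{shifted-stable-induction}. Once these are checked, the colon computation collapses to the argument above, and the linear-quotient property for $\widehat{I}^{[\bda]}$ follows exactly as in the proof of Theorem~\ref{LinQuo}.
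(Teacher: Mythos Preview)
Your proposal is correct and follows essentially the same route as the paper's proof: both use the linear order coming from Lemma~\ref{shifted-stable-induction}, reduce by induction to showing that the colon ideal $\widehat{I'}^{[\bda]}:\widehat{f_\nu}$ is generated by variables, observe that any minimal generator must divide the degree-two monomial $f_\nu$, and exclude the quadratic possibility by exhibiting a linear witness via the same three-case analysis. Your explicit use of the identity $\widehat{f_j}/\gcd(\widehat{f_j},\widehat{f_k})=f_k/\gcd(f_j,f_k)$ makes the argument a bit more transparent than the paper's phrasing through the set $X_\nu$, but the content is identical; just note that the $h=1$ case of Lemma~\ref{shifted-stable-induction} (removal of the rightmost point on a single row) is handled separately there and should be mentioned, though it folds immediately into your case~(\ref{induction-right}) reasoning.
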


\begin{proof}
    We may assume that $I$ is the specialization of the connected shifted quasi-Ferrers diagram $D_{\lambda-\mu}$ with $\lambda-\mu=(\lambda_1,\lambda_2,\dots,\lambda_h;\;\mu_1,\mu_2,\dots,\mu_h)$. When $h=1$, the ideal $I=(x_1x_{\mu_1+1},x_1x_{\mu_1+2},\dots,x_1x_{\lambda_1})$. One can easily verify that the $\bda$-dual has linear quotients.

    In the following, we will assume that $h\ge 2$. Suppose that $G(I)=\Set{f_1\prec \cdots \prec f_\nu}$, where $f_{\nu}$ is the specialization of the lattice point $c=(h,h')$ removed in the proof of \Cref{shifted-stable-induction}. If we write $I'=(f_1,\dots,f_{\nu-1})$, by induction, it suffices to show that the colon ideal $J:=\widehat{I'}^{[\bda]}:\widehat{f}_\nu$ is linear. 

    Similar to the proof for \Cref{LinQuo}, we will consider the set
    \[
    X_\nu:=\Set{x_j\mid \text{ there exists $i\ne j$ such that $f_\nu x_i/x_j\in G(I)$}}.
    \]
    In the three cases of the proof for \Cref{shifted-stable-induction}, we have $X_\nu=\Set{x_h}$, $x_{h'=\lambda_h}\in X_\nu$ and $x_{h'=\mu_h+1}\in X_\nu$ respectively. In particular, $X_\nu\ne \varnothing$. One also easily gets $(X_\nu)\subseteq J$.

    For the converse, take a minimal monomial generator $y$ of $J$.  
    By definition, $y\widehat{f_{\nu}}=w\widehat{f_k}$ for some integer $k<\nu$ and some monomial $w\in R$. This is equivalent to saying that $yf_k=wf_{\nu}$. By the minimality of $y$, one has $\gcd(y,w)=1$. Thus, $y$ divides $f_{\nu}$.  As $f_\nu$ has degree $2$, if $y$ is not linear, this reduces to $y=f_{\nu}=x_{h}x_{h'}$. But we already have $x_{h}$ or $x_{h'}\in J$ by the previous argument. This will contradict the minimality of $y$.  Therefore, $y$ is linear. In turn, we have shown that $J\subseteq (X_{\nu})$.
\end{proof}

In the following, we will focus on a sub-class of shifted stable ideals of degree $2$. Our aim is to construct explicitly planar cellular minimal free resolutions for the dual of such ideals.

With respect to the linear order $f_1\prec \cdots \prec f_\nu$ of the minimal generators of the shifted stable ideal $I=I_{\lambda-\mu}$, given in \Cref{Linear-Order}, let $c_k=(i_k,j_k)$ be the corresponding lattice point for $f_k$ for each $k\in[\nu]$. A minimal quasi-Borel move $c_k\to c_{k'}$ is called \emph{good} if $k'<k$.  The arrows in \Cref{Quasi-Diagram} indicate such moves. For simplicity, we will call good minimal quasi-Borel moves as good moves.

\begin{observation}
    \label{obs2}
    \begin{enumerate}[a]
        \item For each pair of minimal quasi-Borel moves $c\to c'$ and $c'\to c$, exactly one of them is a good move.
        \item Each vertical minimal quasi-Borel move is good if and only if it points northward.
        \item \label{obs2-c}
            There are at most two good moves starting from $c_k$ for $k\ge 2$. When it has $2$ such moves, one of them is a horizontal good move of geometric length $1$ while the other is a vertical good move.
        \item The following are equivalent:
            \begin{enumerate}[i]
                \item all horizontal good moves point westward;
                \item $\mu$ is non-decreasing: $\mu_1 \le \mu_2 \le \cdots \le \mu_h$;
                \item in the linear order $f_1\prec \cdots \prec f_\nu$, if $f_k$ corresponds to $c_k=(i,j)$ and $f_{k'}$ corresponds to $c_{k'}=(i',j')$, then 
                    \[
                    k<k' \Leftrightarrow i< i' \text{ or $i=i'$ and $j<j'$}.
                    \]
            \end{enumerate}
            In particular, stable ideals of degree $2$ satisfy these equivalent conditions.
        \item \label{obs2-e} Suppose that all horizontal moves point westward. If $(i,j-1),(i,j),(i',j)\in D$ with $i'<i$, then $\mu_{i'}\le \mu_{i}<j-1<j\le \lambda_{i'}$. Hence $(i',j-1)\in D$.
    \end{enumerate}
\end{observation}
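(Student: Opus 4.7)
The plan is to unpack the linear order $f_1 \prec \cdots \prec f_\nu$ provided by Corollary \ref{Linear-Order}, which is the reversal of the removal sequence from the inductive proof of Lemma \ref{shifted-stable-induction}. Since each inductive step removes a lattice point from the current bottom-most row $h$ according to the three cases of that proof, the linear order first exhausts row $1$, then row $2$, and so on. Within each row $i \ge 2$ it lists the points in the order $(i, t_i), (i, t_i-1), \ldots, (i, \mu_i+1), (i, t_i+1), (i, t_i+2), \ldots, (i, \lambda_i)$, where $t_i$ is the pivot column from Lemma \ref{shifted-stable-induction}; within row $1$ it is simply left-to-right.

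With this explicit description in hand, (a) follows from the totality of $\prec$ on $G(I)$, while (b) follows because two points sharing a column but sitting in rows $i' < i$ are ordered so that the one in the larger row has the larger linear index. For (c), I will use that each row of $D_{\lambda-\mu}$ is contiguous, so any minimal horizontal move has geometric length $1$ and lands on an adjacent lattice point; a direct inspection of the within-row ordering displayed above shows that from any $c_k$ with $k \ge 2$ at most one of the two horizontal neighbours is earlier in $\prec$, so at most one horizontal good move emanates from $c_k$ and it necessarily has length $1$. Combining with (b), which identifies the nearest lattice point of $D$ above $c_k$ in the same column as the only candidate for a vertical good move, yields the at-most-two bound and the stated horizontal/vertical dichotomy.

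For (d), I will trace the pivot formula $t_i = \max(\mu_{i-1}, \mu_i) + 1$ for $i \ge 2$: the chain $\mu_1 \le \cdots \le \mu_h$ holds if and only if $t_i = \mu_i + 1$ for every $i \ge 2$, which is in turn equivalent to the linear order within each row being purely left-to-right. Left-to-right on every row is then straightforwardly equivalent both to every horizontal good move pointing westward and to the lexicographic criterion (iii); stable ideals of degree $2$ satisfy $\mu_i = i-1$, so the condition holds. Part (e) is a one-line consequence of (d): $(i, j-1) \in D$ forces $\mu_i \le j-2$, so $\mu_{i'} \le \mu_i \le j-2$ by monotonicity, and $(i', j) \in D$ gives $\lambda_{i'} \ge j > j-1$, yielding $(i', j-1) \in D_{\lambda-\mu}$. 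The main conceptual hurdle is extracting the first paragraph's explicit description of $\prec$ from Lemma \ref{shifted-stable-induction}; once that is done, every clause of the observation reduces to an elementary check.
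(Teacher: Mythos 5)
Your proposal is correct: the explicit description of the linear order (rows exhausted in increasing order, and within row $i\ge 2$ the segment $(i,t_i),(i,t_i-1),\dots,(i,\mu_i+1)$ followed by $(i,t_i+1),\dots,(i,\lambda_i)$, with pivot $t_i=\max(\mu_{i-1},\mu_i)+1$) is exactly what the removal procedure in the proof of Lemma \ref{shifted-stable-induction} produces when reversed, and each of the clauses then follows by the elementary checks you indicate. The paper states Observation \ref{obs2} without proof, and your argument is precisely the intended verification; it is consistent, for instance, with the ordering of the generators displayed in Example \ref{quasi-diag-exam}.
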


In the following, we will only consider those connected shifted quasi-Ferrers diagrams $D_{\lambda-\mu}$ with westward horizontal good moves.  With $I=I_{\lambda-\mu}$, for each $k\in[\nu]$, let
\[
X_k=\Set{x_j\mid \text{ there exists $i\ne j$ such that $f_{k'}=f_kx_i/x_j \in G(I)$ with $f_{k'}\prec f_k$}}.
\]
The definition for $X_\nu$ agrees with that defined in the proof for \Cref{shifted-stable-linear-quot}. If $f_k$ corresponds to the lattice point $c_k$ and there are $2$ good moves in the diagram that start from $c_k$, then obviously $|X_k|=2$. However, the converse is not true. For instance, consider the lattice point $(2,4)$ in \Cref{Quasi-Diagram} after omitting the point $(3,3)$ (so that all remaining horizontal good moves point westward). There is only one good move starting from $(2,4)$. However, the corresponding set $X$ has cardinality $2$, because of the existence of $(1,2)$ in the diagram.

\begin{Definition}
    \label{compat}
    A connected shifted quasi-Ferrers diagram $D_{\lambda-\mu}$ as discussed above is called \emph{compatible} if the following two conditions are satisfied:
    \begin{enumerate}[a]
        \item all horizontal good moves point westward;
        \item \label{compat-b} for each $k\in [\nu]$, there are $2$ good moves in the diagram that start from $c_k$ if and only if $|X_k|=2$.
    \end{enumerate}
\end{Definition}

We want to point out that the ``only if'' part of \ref{compat-b} in \Cref{compat} is trivial, as observed earlier. 

\begin{Lemma}
    Let $D=D_{\lambda-\mu}$ be a connected shifted quasi-Ferrers diagram with all horizontal moves point westward. Then, $D_{\lambda-\mu}$ is compatible if and only if for every $i'<i<j$ in $[\nu]$, 
    \begin{equation}
        \label{compatible}
        (i',i),(i,j)\in D \Longrightarrow (i,j-1)\in D.
    \end{equation}
\end{Lemma}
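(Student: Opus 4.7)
The plan is to translate compatibility condition \ref{compat-b} into a lattice-point statement by characterizing, for each $k\in[\nu]$, when $|X_k|=2$ and when there are two good moves out of $c_k=(i_k,j_k)$. Under the standing hypothesis that all horizontal good moves point westward, Observation~\ref{obs2} already tells us that two good moves out of $c_k$ occur iff both the westward horizontal move (present iff $(i_k,j_k-1)\in D$) and the northward vertical move (present iff $(i^*,j_k)\in D$ for some $i^*<i_k$) exist; it also supplies the fact that $\mu$ is non-decreasing.

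Next I would compute $X_k$ directly from its definition. Since $f_k=x_{i_k}x_{j_k}$, one has $X_k\subseteq\{x_{i_k},x_{j_k}\}$. A short case analysis on the substituted variable in $f_kx_{i^*}/x_j$, together with the characterization of $\prec$ in Observation~\ref{obs2}, shows that $x_{i_k}\in X_k$ iff the northward vertical good move at $c_k$ is present, while $x_{j_k}\in X_k$ iff either the westward horizontal good move at $c_k$ is present or there exists some $i'<i_k$ with $(i',i_k)\in D$. Combined with the previous paragraph, compatibility fails at some $c_k$ precisely when the vertical good move is present, the horizontal one is absent, and some $i'<i_k$ satisfies $(i',i_k)\in D$; here we may assume $i_k<j_k$, for otherwise $f_k=x_{i_k}^2$ forces $|X_k|\le 1$.

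The final step is to match this ``bad configuration'' with the negation of \eqref{compatible}. One direction is immediate: setting $i:=i_k$ and $j:=j_k$ yields witnesses $i'<i<j$ with $(i',i),(i,j)\in D$ and $(i,j-1)\notin D$. For the converse, given such $i'<i<j$, I choose $k$ with $c_k=(i,j)$ and must verify that the vertical good move at $(i,j)$ is automatically present. This is the only delicate point of the proof, and it is where connectedness enters: from $(i,j-1)\notin D$ and $(i,j)\in D$ one extracts $\mu_i=j-1$, hence $\mu_{i-1}\le j-1<j$ by monotonicity of $\mu$; meanwhile, connectedness of $D$ forces rows $i-1$ and $i$ to overlap, which, since row $i$ begins at column $j$, means $\lambda_{i-1}\ge j$. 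Therefore $(i-1,j)\in D$, providing the missing vertical good move. I expect this simultaneous use of connectedness and non-decreasing $\mu$ to be the main obstacle; once it is in place, the equivalence becomes bookkeeping between the two characterizations above.
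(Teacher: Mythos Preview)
Your proposal is correct and follows essentially the same route as the paper: both characterize $X_k$ and the good moves out of $c_k$ in lattice-point terms and then compare, with the only organizational difference being that you argue via the contrapositive and front-load the characterization, while the paper argues each implication directly. One point worth noting is that you are more explicit than the paper at the delicate spot: in deducing that the northward vertical move at $(i,j)$ exists when $(i,j-1)\notin D$, you spell out the use of connectedness together with the monotonicity of $\mu$ to force $(i-1,j)\in D$, whereas the paper's phrase ``it suffices to consider the special case when there is only one good move starting from $c_k$ which is vertical'' leaves this step implicit (relying on the linear order from Lemma~\ref{shifted-stable-induction} to guarantee at least one good move).
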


\begin{proof}
    First, suppose that $D$ is compatible and $c_{k'}:=(i',i),c_k:=(i,j)\in D$ as above. Then $k\ge 2$ and $|X_k|=1$ or $2$. 
    Notice that all horizontal good moves has geometric length $1$. Thus, 
    by \Cref{obs2} \ref{obs2-c}, it suffices to consider the special case when there is only one  good move starting from $c_k$ which is vertical. This implies that $x_{i}\in X_k$. However, $x_j\in X_k$ because of the existence of $c_{k'}\in D$. This forces $X_{k}=\Set{x_i,x_j}$, having cardinality $2$. By the compatibility assumption, there are $2$ good moves starting from $c_k$, a contradiction for the existence of this special case.

    Conversely, with the notations as above,  it is enough to show that if $X_k=\Set{x_i,x_j}$, then there are two good moves starting from $c_k$. 
    \begin{enumerate}[a]
        \item As $x_i\in X_k$, one has $a<i$ with $(a,j)\in D$. Let $a$ be the largest one satisfying this property. Hence $c_k\to (a,j)$ is a good move.
        \item As $x_j\in X_k$, one has $i'<j$ with $x_{i'}x_i\in G(I)$. If $i'\ge i$, then $(i,i')\in D$. As $D$ is a shifted quasi-Ferrers diagram, each row is connected. This implies that $(i,j-1)\in D$. If $i'<i$, by our assumption in \Cref{compatible}, we still have $(i,j-1)\in D$.
    \end{enumerate}
    In short, there are two good moves starting from $c_k$.  As $c_k$ is arbitrary, this shows that $D$ is compatible. 
\end{proof}

Since \Cref{compatible} is obviously satisfied by the diagrams for stable ideals, we immediately have the following corollary.

\begin{Corollary}
    If $I=I_{\lambda-\mu}$ is stable, then the diagram $D_{\lambda-\mu}$ is compatible.
\end{Corollary}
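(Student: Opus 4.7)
The plan is to deduce this from the preceding lemma by verifying its hypothesis. That is, I must check two things for a stable ideal $I=I_{\lambda-\mu}$ of degree $2$: that all horizontal good moves in $D_{\lambda-\mu}$ point westward, and that the implication \eqref{compatible} holds.

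First I would recall that stable ideals of degree $2$ correspond exactly to shifted stable ideals for which $\mu=(0,1,\dots,h-1)$, so $\mu_i=i-1$ for each $i\in[h]$. Hence $\mu$ is (strictly) non-decreasing, and by Observation \ref{obs2} all horizontal good moves point westward. Moreover the diagram takes the explicit form
\[
D_{\lambda-\mu}=\Set{(i,j)\in\ZZ_+^2 \mid i\le j\le \lambda_i}.
\]

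Next I would verify condition \eqref{compatible}. Suppose $i'<i<j$ with $(i',i),(i,j)\in D_{\lambda-\mu}$. From $(i,j)\in D_{\lambda-\mu}$ one has $i\le j\le\lambda_i$, and since $i<j$ we also have $i\le j-1$, while $j-1<j\le\lambda_i$ gives $j-1\le\lambda_i$. Consequently $(i,j-1)\in D_{\lambda-\mu}$, which is exactly what \eqref{compatible} requires.

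Finally, combining these two observations with the preceding lemma yields that $D_{\lambda-\mu}$ is compatible. There is no real obstacle here; the statement is essentially a bookkeeping check that the stable case fits the hypothesis of the previous characterization of compatibility, and no deeper argument is needed.
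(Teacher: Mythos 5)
Your proposal is correct and follows essentially the same route as the paper: the paper also deduces the corollary from the preceding lemma, noting (via Observation \ref{obs2}) that stable ideals of degree $2$ have $\mu_i=i-1$ so all horizontal good moves point westward, and that condition \eqref{compatible} is immediate since each row of the diagram is the full interval $i\le j\le\lambda_i$. Your write-up merely spells out the bookkeeping that the paper declares obvious.
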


However, the converse is not true.

\begin{Example}
    \label{ComNotSat}
    Consider the shifted partition $\lambda-\mu=(3,3;1,1)$. The shifted quasi-Ferrers diagram illustrated in \Cref{Compatible-but-not-stable} is compatible and the associated shifted stable ideal is $I_{\lambda-\mu}=(x_1x_2,x_1x_3,x_2^2,x_2x_3)$. However, this ideal is not stable, even after permutations of variables. 
    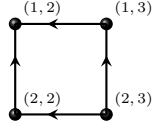
\begin{figure}[h]
        \begin{center}
            \begin{tikzpicture} [thick, scale=1.2, every node/.style={scale=0.8}]]
                \shade [shading=ball, ball color=black]  (2,-1) circle (.07) node [above right] {\scriptsize$(1,2)$};
                \shade [shading=ball, ball color=black]  (3,-1) circle (.07) node [above right] {\scriptsize$(1,3)$};    
                \shade [shading=ball, ball color=black]  (2,-2) circle (.07) node [above right] {\scriptsize$(2,2)$};
                \shade [shading=ball, ball color=black]  (3,-2) circle (.07) node [above right] {\scriptsize$(2,3)$};

                \draw[thick, directed] (3,-1) -- (2,-1);
                \draw[thick, directed] (3,-2) -- (2,-2);
                \draw[thick, directed] (3,-2) -- (3,-1);
                \draw[thick, directed] (2,-2) -- (2,-1);
            \end{tikzpicture}
            \caption{Compatible but not stable} \label{Compatible-but-not-stable}
        \end{center}
    \end{figure}
\end{Example}

From now on, we will only consider compatible diagrams. Suppose that $D=D_{\lambda-\mu}$ satisfies this requirement and $I=I_{\lambda-\mu}$ is the specialization ideal.
The good moves in $D$ induce a polyhedral cell complex $X_I$ of dimension at most $2$ as follows.
\begin{enumerate}[a]
    \item The $0$-cells are the lattice points $c_k$'s corresponding to the monomials in $G(I)$.
    \item The $1$-cells correspond to the good moves. To define the orientation, horizontal $1$-cells point westward while vertical $1$-cells point northward.
    \item The $2$-cells come from the bounded rectangular components, cut in the plane by the good moves. Each $2$-cell is uniquely determined by the lattice point on its lower right corner. This corner point $c$ comes from \Cref{obs2} \ref{obs2-c} and provides exactly $2$ good moves that start from it. It is the last corner with respect to the given linear order. The width of this cell is always $1$. It follows from \Cref{obs2} \ref{obs2-e} that the southeast and northeast corners of this cell are lattice points from $G(I)$. We endow this cell the orientation, so that the induced boundary orientation is counterclockwise.
\end{enumerate}

Suppose that $I$ is $\bda$-determined.
We will label the cells of $X_I$ as follows.
\begin{enumerate}[a]
    \item We will label the $0$-cell $(i,j)$ of $X_I$ that corresponds to $f_{i,j}=x_ix_j\in G(I)$, by the dual monomial $\widehat{f_{i,j}}=\bdx^\bda/(x_ix_j)$.
    \item For a horizontal edge corresponding to the good move $(i,j)\to (i,j-1)$, we label it by $\lcm(\widehat{f_{i,j}},\widehat{f_{i,j-1}})=\bdx^\bda/x_i$. For a vertical edge corresponding to the good move $(i,j)\to (i',j)$, we label it by $\lcm(\widehat{f_{i,j}},\widehat{f_{i',j}})=\bdx^\bda/x_j$.
    \item All $2$-cells will inevitably be labeled by $\bdx^\bda$.
\end{enumerate}

\begin{Example}
    Let $I=I_{\lambda-\mu}=(x_1x_2,x_1x_3,x_2^2,x_2x_3)$ as in \Cref{ComNotSat}, then $I_{\lambda-\mu}$ is $\bda$-determined when $\bda=(3,4,2)$. We have $\widehat{I}^{[\bda]}=(x_1^2x_2^3x_3^2,x_1^2x_2^4x_3,x_1^3x_2^2x_3^2,x_1^3x_2^3x_3)$. The labels of the cells of $X_I$ are illustrated in \Cref{Lab-Compatible-but-not-stable}.
    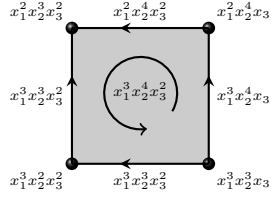
\begin{figure}[h]
        \begin{center}
            \begin{tikzpicture} [thick, scale=1.2, every node/.style={scale=0.8}]]
                \shade [shading=ball, ball color=black]  (2,-1) circle (.07) node [above left] {\scriptsize$x_1^2x_2^3x_3^2$};
                \shade [shading=ball, ball color=black]  (3.5,-1) circle (.07) node [above right] {\scriptsize$x_1^2x_2^4x_3$};    
                \shade [shading=ball, ball color=black]  (2,-2.5) circle (.07) node [below left] {\scriptsize$x_1^3x_2^2x_3^2$};
                \shade [shading=ball, ball color=black]  (3.5,-2.5) circle (.07) node [below right] {\scriptsize$x_1^3x_2^3x_3$};

                \draw[thick, directed] (3.5,-1) -- (2,-1) node [above, pos=0.5] {\scriptsize$x_1^2x_2^4x_3^2$};
                \draw[thick, directed] (3.5,-2.5) -- (2,-2.5) node [below, pos=0.5] {\scriptsize$x_1^3x_2^3x_3^2$};
                \draw[thick, directed] (2,-2.5) -- (2,-1) node [left, pos=0.5] {\scriptsize$x_1^3x_2^3x_3^2$};
                \draw[thick, directed] (3.5,-2.5) -- (3.5,-1) node [right, pos=0.5] {\scriptsize$x_1^3x_2^4x_3$};
                \draw (2.8,-1.4)+(-60:.6) [-{stealth[flex]}] arc(-30:280:.40); 
                \path[fill=black,fill opacity=0.2]    (2,-1)--(3.5,-1)--(3.5,-2.5)--(2,-2.5)--cycle;
                \path (2,-1.7)--(3.5,-1.7) node [pos=0.5] {\scriptsize$x_1^3x_2^4x_3^2$};

            \end{tikzpicture}
            \caption{$X_I$ with $I=(x_1x_2,x_1x_3,x_2^2,x_2x_3)$} \label{Lab-Compatible-but-not-stable}
        \end{center}
    \end{figure}
\end{Example}

With the orientations and the labels described above, we will have an induced cellular free complex $\calF_{X_I}$. Our aim is to show that $\calF_{X_I}$ provides a minimal free resolution for $R/\widehat{I}^{[\bda]}$.

\begin{Theorem}
    \label{compatible-stable}
    Let $I=I_{\lambda-\mu}$ be an $\bda$-determined shifted stable ideal in $R=\KK[x_1,\dots,x_n]$, generated in degree $2$. If the diagram $D=D_{\lambda-\mu}$ is compatible, then the complex $\calF_{X_I}$ provides a minimal free resolution for $R/\widehat{I}^{[\bda]}$. 
\end{Theorem}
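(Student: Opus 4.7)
The plan is to mirror the proof of Theorem \ref{Borel}, proceeding by induction on $\nu$ via iterated mapping cones. Take the linear order $f_1\prec\cdots\prec f_\nu$ from Corollary \ref{Linear-Order} and set $I'=(f_1,\ldots,f_{\nu-1})$; the diagram $D'$ obtained by removing the last-added lattice point $c_\nu$ remains compatible, so the inductive hypothesis supplies a minimal cellular resolution $\calF_{X_{I'}}$ of $R/\widehat{I'}^{[\bda]}$. The base case $\nu=1$ is trivial, since $X_I$ is just a single labeled vertex.

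For the inductive step, Proposition \ref{shifted-stable-linear-quot} identifies $\widehat{I'}^{[\bda]}:\widehat{f_\nu}$ with the linear ideal $(X_\nu)$, where $|X_\nu|\in\{1,2\}$, giving the short exact sequence
\[
0\to R/(X_\nu)(-\widehat{f_\nu})\xrightarrow{\,\cdot\,\widehat{f_\nu}\,}R/\widehat{I'}^{[\bda]}\to R/\widehat{I}^{[\bda]}\to 0.
\]
Let $\bdK$ denote the Koszul complex on $X_\nu$ internally shifted by $\widehat{f_\nu}$; it is a minimal resolution of the leftmost term. I would construct a comparison map $\varphi:\bdK\to\calF_{X_{I'}}$ by taking $\varphi_0$ to be multiplication by $\widehat{f_\nu}$ and sending each degree-$1$ basis element $e_{\widehat{f_\nu}x}$ with $x\in X_\nu$ to $\pm y\cdot e_{\widehat{f_k}}$, where $c_\nu\to c_k$ is the corresponding good move and $f_k=f_\nu y/x$. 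By Definition \ref{mapping-cone} the mapping cone of $\varphi$ resolves $R/\widehat{I}^{[\bda]}$, and the task reduces to identifying this cone, term by term and differential by differential, with $\calF_{X_I}$.

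The case $|X_\nu|=1$ is straightforward: the cone appends a single new vertex $c_\nu$ in homological degree $1$ and a single new edge in degree $2$, both read directly off $\varphi_0$ and $\varphi_1$. When $|X_\nu|=2$, compatibility furnishes exactly two good moves from $c_\nu$ (one westward of unit length, one northward) and a unique new $2$-cell $P$ in $X_I$ with lower right corner $c_\nu$; the cone then adds modules for $c_\nu$, the two new edges, and $P$. The Koszul piece $-d^{\bdK}_2(e_{\bdx^\bda})$ produces the signed contributions of the two new edges to $\partial P$, while I would define
\[
\varphi_2(e_{\bdx^\bda})\;=\;\sum_{E}\varepsilon(E,P)\,\bdx^{\bda-\bdalpha_E}\,e_E
\]
summed over the edges $E$ of $P$ already present in $X_{I'}$. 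The chain-map identity $\partial_2^{\calF_{X_{I'}}}\circ\varphi_2=\varphi_1\circ d^{\bdK}_2$ is then equivalent to the statement that $\partial P$ is a cycle in $\calF_{X_I}$, which holds because $\varepsilon$ is a bona fide polyhedral incidence function. Minimality is automatic since each nonzero entry in the differentials is a nontrivial monomial.

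The hard part is the $|X_\nu|=2$ case when the $2$-cell $P$ has intermediate lattice points on its left edge (in column $j_\nu-1$ between rows $i''$ and $i_\nu$, which can genuinely occur even under compatibility). Here $\partial P$ involves more than four edges of $X_{I'}$, the sum defining $\varphi_2(e_{\bdx^\bda})$ has three or more summands, and the sign bookkeeping becomes delicate. The essential input is that compatibility together with Observation \ref{obs2}\ref{obs2-e} force every such intermediate point $(i''',j_\nu-1)$ to lie in $D$ and to contribute unit-length vertical good moves that partition the left side of $P$ coherently; this is what lets the signs of $\varepsilon$ be chosen so that $\varphi_2$ is well-defined and the cone's differential on $P$ matches that of $\calF_{X_I}$.
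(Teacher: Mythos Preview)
Your proposal is correct and follows essentially the same route as the paper: induction on $\nu$ via iterated mapping cones, using the linear-quotient identification $\widehat{I'}^{[\bda]}:\widehat{f_\nu}=(X_\nu)$ from Proposition \ref{shifted-stable-linear-quot}, the Koszul complex on $X_\nu$ shifted by $\widehat{f_\nu}$, and an explicit comparison map $\varphi$ into $\calF_{X_{I'}}$. The paper splits into the same two cases (one versus two good moves from $c_\nu$) and, in the two-move case, handles precisely the phenomenon you flag as the ``hard part'': the left side of the new rectangular $2$-cell may be subdivided by intermediate lattice points, which the paper records as a quasi-Borel walk $p_0\to p_1\to\cdots\to p_s$ and then verifies the chain-map identity by writing out the matrices for $\varphi_1$, $\varphi_2$, and the relevant pieces of $\partial_0$, $\partial_1$ and checking commutativity directly. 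Your formulation of $\varphi_2$ as $\sum_E \varepsilon(E,P)\,\bdx^{\bda-\bdalpha_E}e_E$ over the old boundary edges of $P$, with the commutativity reduced to $\partial^2 P=0$ via the incidence function, is a cleaner (and equivalent) way to package the same verification; the paper's explicit matrices simply unroll this sum.
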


\begin{proof}
    As in \Cref{Linear-Order}, we assume that $G(I)=\Set{f_1\prec f_2 \prec \cdots \prec f_{\nu}}$, and let $I'=(f_1,\dots,f_{\nu-1})$. Suppose that $f_{\nu}=x_{t_1}x_{t_2}$ with $t_1\le t_2$. 
    \begin{enumerate}[a]
        \item Assume that there are two good moves starting from $(t_1,t_2)$. This can only happen in the case \ref{induction-right} of the proof for \Cref{shifted-stable-induction}, as noticed by \Cref{obs2} \ref{obs2-c}.  Whence, $t_1< t_2$ and we will remove the rightmost lattice point from the last row. The two good moves are: 
            \[
            e_1:(t_1,t_2)\to (t_1,t_2-1) \quad\text{ and }\quad e_2:(t_1,t_2)\to (t_0,t_2), 
            \]
            for $t_0<t_1$. Then $f_{\nu}$ contributes to $X_{I}$ the $0$-cell $(t_1,t_2)$, the two edges corresponding to $e_1$ and $e_2$, and the rectangular $2$-cell $Z$ with $e_1$ and $e_2$ as its two adjacent sides. We may assume that the other two sides of $Z$ are $e_3: (t_0,t_2) \to (t_0,t_2-1)$ and 
            \[
            p_0:=(t_0'=t_1,t_2-1)\stackrel{e_1'}{\to} p_1:=(t_1',t_2-1) \stackrel{e_2'}{\to} \cdots \stackrel{e_s'}{\to} p_s:=(t_s'=t_0,t_2-1).
            \]
            In the above quasi-Borel walk, we assume that each quasi-Borel move minimal. As a result, they are good moves. 
            Of course, $e_3$ is also a good move. 
            We will write $q_1$ for the point $(t_1,t_2)$ and $q_2$ for the point $(t_0,t_2)$. Now, the overall picture is illustrated in \Cref{Rectangle-Stable}.

            \begin{figure}[h]
                \begin{center}
                    \begin{tikzpicture}[thick, scale=1, every node/.style={scale=0.8}]
                        \shade [shading=ball, ball color=black]  (5,-1) circle (.07) node [left] {\scriptsize$p_s(t_0=t_s',t_2-1)$};
                        \shade [shading=ball, ball color=black]  (5,-3) circle (.07) node [left] {\scriptsize$p_2(t_2',t_2-1)$};
                        \shade [shading=ball, ball color=black]  (4,-2) node {$\vdots$};
                        \shade [shading=ball, ball color=black]  (5,-4) circle (.07) node [left] {\scriptsize$p_1(t_1',t_2-1)$};
                        \shade [shading=ball, ball color=black]  (5,-5) circle (.07) node [left] {\scriptsize$p_0(t_0'=t_1,t_2-1)$};
                        \shade [shading=ball, ball color=black]  (6,-1) circle (.07) node [right] {\scriptsize$q_2(t_0,t_2)$};
                        \shade [shading=ball, ball color=black]  (6,-5) circle (.07) node [right] {\scriptsize$q_1(t_1,t_2)$};

                        \draw[thick, directed] (6,-5) -- (5,-5) node[midway,below] {\scriptsize$e_1$};
                        \draw[thick, directed] (5,-5) -- (5,-4) node[midway,left] {\scriptsize$e_1'$};
                        \draw[thick, directed] (5,-4) -- (5,-3) node[midway,left] {\scriptsize$e_2'$};
                        \draw[thick, directed] (5,-3) -- (5,-1);
                        \draw[thick, directed] (6,-1) -- (5,-1) node[midway,above] {\scriptsize$e_3$};
                        \draw[thick, directed] (6,-5) -- (6,-1) node[midway,right] {\scriptsize$e_2$};

                        \path[fill=black,fill opacity=0.2]    (5,-1)--(6,-1)--(6,-5)--(5,-5);
                        \draw (5.5,-3)+(-30:.2) [-{stealth[flex]}] arc(-30:280:.2); 

                    \end{tikzpicture}
                    \caption{The new $2$-cell $Z$} \label{Rectangle-Stable}
                \end{center}
            \end{figure}
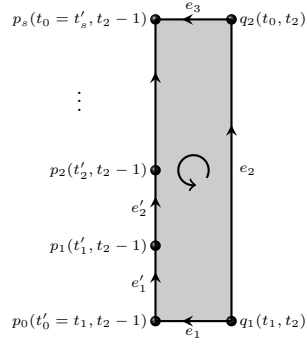

            Notice that from the proof of \Cref{shifted-stable-linear-quot}, we know $\widehat{I'}^{[\bda]}:\widehat{I}^{[\bda]}=(x_{t_1},x_{t_2})$. This colon ideal can be resolved minimally by the Koszul complex:
            \[
            0\to R(Z,x_{t_1}x_{t_2}) \xrightarrow{
            \begin{pmatrix}
                -x_{t_2} \\ x_{t_1}
            \end{pmatrix}
            } R(e_2,x_{t_1})\oplus R(e_1,x_{t_2})\xrightarrow{
            \begin{pmatrix}
                x_{t_1} & x_{t_2}
            \end{pmatrix}
            } R(\varnothing,1).
            \]
            Meanwhile, by induction, we may assume that $\calF_{X_{I'}}$ gives a minimal cellular free resolution for $\widehat{I'}^{[\bda]}$:
            \[
            0 \to (\calF_{X_{I'}})_2 \xrightarrow{\partial_2} (\calF_{X_{I'}})_1  \xrightarrow{\partial_1} (\calF_{X_{I'}})_0 \xrightarrow{\partial_0} R.
            \]
            We have the comparison map for the mapping cone as follows, induced naturally from the polyhedral cell complex $X_I$:
            \[
            \begin{tikzcd}[ampersand replacement=\&]
                0 \arrow[r] \&  0\arrow[r] \arrow[d,"0"] \& 
                R(Z,\bdx^{\bda})  \arrow[r, "{\begin{pmatrix} 
                    -x_{t_2} \\x_{t_1} 
                \end{pmatrix}}"] \arrow[d,"\varphi_2"]
                \&
                R(e_2,\widehat{f}_{\nu}x_{t_1})\oplus R(e_1,\widehat{f}_{\nu}x_{t_2}) \arrow[r,"{(x_{t_1}\,  x_{t_2})}"] \arrow[d,"\varphi_1"]
                \& R(\varnothing,\widehat{f}_{\nu})  \arrow[d,"\widehat{f}_\nu"]  \\ 
                0 \arrow[r] \& (\calF_{X_{I'}})_2 \arrow[r,"\partial_2"]  \& (\calF_{X_{I'}})_1 \arrow[r,"\partial_1"] \& (\calF_{X_{I'}})_0  \arrow[r,"\partial_0"] \& R.
            \end{tikzcd}
            \]
            Here, the minimal free resolution in the top is shifted in multidegree by $\widehat{f}_{\nu}$ so that the comparison map is of degree $0$. For our homomorphisms $\varphi_1$ and $\varphi_2$, the essential direct summands of $(\calF_{X_{I'}})_0$ and $(\calF_{X_{I'}})_1$ are
            \[
            R(q_2,\bdx^{\bda}/(x_{t_2}x_{t_0}))\oplus R(p_0,\bdx^{\bda}/(x_{t_2-1}x_{t_0'}))\oplus \cdots \oplus R(p_s,\bdx^{\bda}/(x_{t_2-1}x_{t_s'}))
            \]
            and
            \[
            R(e_3,\bdx^{\bda}/x_{t_0})\oplus R(e_1',\bdx^{\bda}/x_{t_2-1})\oplus \cdots \oplus R(e_s',\bdx^{\bda}/x_{t_2-1})
            \]
            respectively.
            Written in matrix form, the submatrix for $\varphi_2$ with respect to these submodules is the column vector 
            \[
            (x_{t_0},-x_{t_2-1},-x_{t_2-1},\dots,-x_{t_2-1})^\upT.
            \]
            The submatrix for $\varphi_1$ is 
            \[
            \begin{pmatrix}
                x_{t_0} & 0 & 0 & \cdots \\
                0 & x_{t_2-1} & 0 & \cdots 
            \end{pmatrix}^\upT.
            \]
            The submatrix for $\partial_0$ is the row vector 
            \[
            \begin{pmatrix}
                \bdx^{\bda}/(x_{t_2}x_{t_0}) &  \bdx^{\bda}/(x_{t_2-1}x_{t_0'}) & \cdots & \bdx^{\bda}/(x_{t_2-1}x_{t_s'})
            \end{pmatrix}.
            \]
            The submatrix for $\partial_1$ is 
            \[
            \begin{pmatrix}
                -x_{t_2} \\
                0 & -x_{t_1}=-x_{t_0'} \\
                & x_{t_1'} & -x_{t_1'} \\
                & & x_{t_2'} & -x_{t_2'} \\
                & & & \ddots & \ddots \\
                & & & & x_{t_{s-1}'} & -x_{t_{s-1}'} \\
                x_{t_2-1} & & & & & x_{t_s'}=x_{t_0}
            \end{pmatrix}.
            \]
            By an easy checking of the commutativity of the diagrams using matrices, we know the claimed comparison map is valid. Therefore, the mapping cone of this comparison map, as defined in \Cref{mapping-cone}, provides a free resolution of $R/\widehat{I}^{[\bda]}$:
            \[
            0 \to 
            R(Z,\bdx^{\bda})\oplus (\calF_{X_{I'}})_2  \to  
            R(e_2,\widehat{f}_{\nu}x_{t_1})\oplus R(e_1,\widehat{f}_{\nu}x_{t_2})\oplus (\calF_{X_{I'}})_1 \to R(\varnothing,\widehat{f}_{\nu})\oplus (\calF_{X_{I'}})_0 \to R.
            \]
            The resolution is minimal by checking the maps described above. One can also check with ease that this resolution agrees with the cellular free resolution $\calF_{I_X}$  with the desired differentials.
        \item Assume that there is only one good move staring from the lattice point $(t_1,t_2)$. The proof is similar and simpler. \qedhere
    \end{enumerate}
\end{proof}

In the following, we will focus on classical stable ideals. Notice that if $I=I_{\lambda-\mu}$ is stable, the diagram $D=D_{\lambda-\mu}$ is compatible.

Let $T$ be a non-empty set of monomials in $R=\KK[x_1,\dots,x_n]$. We write $\Sta(T)$ for the unique smallest stable ideal in $R$ that contains $T$.  If $J$ is a stable ideal in $R$, we write $\StaG(J)$ for the unique smallest set $T$ of monomials in $R$ with $\Sta(T)=J$.  
\Cref{Sta741} illustrates a stable ideal of degree $2$ in $R=\KK[x_1,\dots,x_7]$ with good moves.

\begin{figure}[h]
    \begin{center}
        \begin{tikzpicture} [thick, scale=1.2, every node/.style={scale=0.8}]]
            \shade [shading=ball, ball color=black]  (1,-1) circle (.07) node [above right] {\scriptsize$(1,1)$};
            \shade [shading=ball, ball color=black]  (2,-1) circle (.07) node [above right] {\scriptsize$(1,2)$};
            \shade [shading=ball, ball color=black]  (2,-2) circle (.07) node [above right] {\scriptsize$(2,2)$};
            \shade [shading=ball, ball color=black]  (3,-1) circle (.07) node [above right] {\scriptsize$(1,3)$};
            \shade [shading=ball, ball color=black]  (3,-2) circle (.07) node [above right] {\scriptsize$(2,3)$};
            \shade [shading=ball, ball color=black]  (3,-3) circle (.07) node [above right] {\scriptsize$(3,3)$};
            \shade [shading=ball, ball color=black]  (4,-1) circle (.07) node [above right] {\scriptsize$(1,4)$};
            \shade [shading=ball, ball color=black]  (4,-2) circle (.07) node [above right] {\scriptsize$(2,4)$};
            \shade [shading=ball, ball color=black]  (4,-3) circle (.07) node [above right] {\scriptsize$(3,4)$};
            \shade [shading=ball, ball color=black]  (4,-4) circle (.07) node [above right] {\scriptsize$(4,4)$};
            \shade [shading=ball, ball color=black]  (5,-1) circle (.07) node [above right] {\scriptsize$(1,5)$};
            \shade [shading=ball, ball color=black]  (5,-2) circle (.07) node [above right] {\scriptsize$(2,5)$};
            \shade [shading=ball, ball color=black]  (5,-4) circle (.07) node [above right] {\scriptsize$(4,5)$};
            \shade [shading=ball, ball color=black]  (6,-1) circle (.07) node [above right] {\scriptsize$(1,6)$};
            \shade [shading=ball, ball color=black]  (6,-4) circle (.07) node [above right] {\scriptsize$(4,6)$};
            \shade [shading=ball, ball color=black]  (7,-4) circle (.07) node [above right] {\scriptsize$(4,7)$};

            \draw[thick, directed] (7,-4) -- (6,-4);
            \draw[thick, directed] (6,-4) -- (5,-4);
            \draw[thick, directed] (5,-4) -- (4,-4);
            \draw[thick, directed] (4,-4) -- (4,-3);
            \draw[thick, directed] (4,-3) -- (4,-2);
            \draw[thick, directed] (4,-2) -- (4,-1);
            \draw[thick, directed] (4,-3) -- (3,-3);
            \draw[thick, directed] (3,-3) -- (3,-2);
            \draw[thick, directed] (4,-2) -- (3,-2);
            \draw[thick, directed] (3,-2) -- (2,-2);
            \draw[thick, directed] (2,-2) -- (2,-1);
            \draw[thick, directed] (2,-1) -- (1,-1);
            \draw[thick, directed] (3,-2) -- (3,-1);
            \draw[thick, directed] (4,-1) -- (3,-1);
            \draw[thick, directed] (3,-1) -- (2,-1);
            \draw[thick, directed] (5,-4) -- (5,-2);
            \draw[thick, directed] (5,-2) -- (4,-2);
            \draw[thick, directed] (5,-2) -- (5,-1);
            \draw[thick, directed] (6,-1) -- (5,-1);
            \draw[thick, directed] (5,-1) -- (4,-1);
            \draw[thick, directed] (6,-4) -- (6,-1);
        \end{tikzpicture}
        \caption{$\Sta(\{x_2x_5,x_1x_6,x_4x_7\})$ with good moves} \label{Sta741}
    \end{center}
\end{figure}

Now, suppose that the stable ideal $I$ coming from the diagram $D_{\lambda-\mu}$ satisfies
\begin{equation} \label{StaG-EQ}
    \StaG(I)=\Set{m_k=x_{k_{1}}x_{k_{2}}|1\le k\le g \text{ with }k_{1}\le k_{2}}.
\end{equation}
Notice that $h=\max\Set{k_{1}|1\le k\le g}$ in $\lambda-\mu=(\lambda_1,\dots,\lambda_h;\mu_1,\dots,\mu_h)$.  For each $j\in [n]$, we define
\[
Y_j:=\Set{i\in[j]| x_ix_j\in G(I)}.
\] 

\begin{observation}
    \label{obs1}
    \begin{enumerate}[a]
        \item For every $x_i^2\in G(I)$ with $2\le i\le h$, there exists exactly one good move starting from the corresponding lattice point $(i,i)$. It points northward.
        \item \label{obs1-b} For every $x_ix_j\in G(I)$ with $i<j$, there always exists a westward good move of geometric length $1$ that starts from the corresponding lattice point $(i,j)$. There exists another good move starting from $(i,j)$ precisely when the $|Y_j|\ge 2$ and $i>\min(Y_j)$. This good move points northward.
        \item \label{obs1-c} With $\StaG(I)$ as in \Cref{StaG-EQ}, we have the containment:
            \[
            Y_1 \subset Y_2 \subset \cdots \subset Y_h \supseteq Y_{h+1}\supseteq \cdots \supseteq Y_e 
            \]
            with 
            \[
            e=\max\Set{k_{2}|k\in[g]}=\max\Set{\lambda_i\mid i\in [h]}.
            \]
            To be more precise, for $1\le j\le h$, $Y_j=\Set{1,2,\dots,j}$. For $h+1\le j\le e$, $Y_j=\Set{k_{1} | 1\le k\le g \text{ with } k_{2}\ge j}$.   
    \end{enumerate}
\end{observation}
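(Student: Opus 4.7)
The plan is to verify parts (a), (b), (c) in order, exploiting the stability of $I$ (only max-index reductions are permitted) together with the decomposition $I = \bigcup_{k=1}^{g} \Sta(m_k)$.

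For part (a), from a diagonal point $(i,i)$ with $2 \le i \le h$, the shifted-quadrant constraint $a \le b$ forbids westward moves (no $(i,j)$ with $j < i$ lies in $D$) and southward moves (no $(i',i)$ with $i' > i$ lies in $D$). Eastward moves $(i,i) \to (i,j')$ with $j' > i$ reach a later point in the stable linear order (see Observation \ref{obs2}), hence cannot be good. Only northward moves remain; and stability applied to $x_i^2$ (whose max index is $i$) gives $x_{i-1} x_i = x_i^2 \cdot x_{i-1}/x_i \in I$, so $(i-1,i) \in D$, which guarantees the minimal northward good move $(i,i) \to (i-1,i)$. The fact that a minimal quasi-Borel move in a fixed direction is unique then gives the claimed uniqueness.

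For part (b), with $(i,j) \in D$ and $i < j$, stability applied to $x_i x_j$ (max index $j$) yields $x_i x_{j-1} \in I$, so $(i, j-1) \in D$ and the westward length-$1$ good move $(i,j) \to (i,j-1)$ always exists. The only remaining candidate direction for a good move is northward, which exists iff some $(i', j) \in D$ has $i' < i$, equivalently $\min Y_j < i$. Since $i \in Y_j$, this is equivalent to $i > \min(Y_j)$, which already forces $|Y_j| \ge 2$.

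For part (c), the preliminary fact to establish is the explicit description
\[
\Sta(x_{k_1} x_{k_2}) = \{x_{k_1} x_j : j \le k_2\} \cup \{x_i x_j : i \le j \le k_1\},
\]
valid for any $k_1 \le k_2$. One argues $\subseteq$ by tracing stability operations: starting from $x_{k_1} x_{k_2}$, successive max-reductions keep the form $x_{k_1} x_j$ until the second index drops to $k_1$, after which $k_1$ itself becomes the max and can be reduced further, producing the full triangle $\{x_i x_j : i \le j \le k_1\}$; the reverse inclusion is straightforward since the right-hand side is a stable set containing $x_{k_1} x_{k_2}$. Since $I = \bigcup_k \Sta(m_k)$, one reads off $Y_j$ case-by-case. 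For $j \le h$, pick any $k$ with $k_1 = h \ge j$ (such $k$ exists because $h = \max_k k_1$); then $(i,j)$ lies in the triangle part of $\Sta(m_k)$ for every $i \le j$, forcing $Y_j = [j]$. For $j > h$, every $k$ has $k_1 \le h < j$, so $(i,j) \in D$ can only come from the row-$k_1$ part of some $\Sta(m_k)$, which forces $i = k_1$ and $k_2 \ge j$; hence $Y_j = \{k_1 : k_2 \ge j\}$. The containment chains then follow: $Y_j = [j]$ is strictly increasing up to $j = h$, and $\{k_1 : k_2 \ge j\}$ is weakly decreasing in $j$ on $h \le j \le e$.

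The main obstacle is the explicit description of $\Sta(x_{k_1} x_{k_2})$: the delicate point is that once the original max index has been reduced down to $k_1$, the role of max shifts back to $k_1$, which then becomes eligible for further stability reductions and opens up the entire triangular region of monomials supported on indices $\le k_1$. Once this bookkeeping is done, parts (a) and (b) are immediate from local stability arguments.
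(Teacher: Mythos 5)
Your argument is correct, and there is nothing in the paper to measure it against: the statement is given as an Observation in remark style with no proof supplied, so your write-up is precisely the routine verification the authors leave implicit. Your route --- the explicit description of the degree-two monomials of $\Sta(x_{k_1}x_{k_2})$ as the row $\{x_{k_1}x_j : k_1\le j\le k_2\}$ together with the triangle $\{x_ix_j : i\le j\le k_1\}$, combined with the direction/goodness checks borrowed from Observation \ref{obs2} and the stability moves $x_ix_j\mapsto x_ix_{j-1}$, $x_i^2\mapsto x_{i-1}x_i$ --- is the natural one and is sound, since for ideals generated in a single degree the degree-two monomials of $I=\Sta(\StaG(I))$ are exactly the union of those of the $\Sta(m_k)$. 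Two cosmetic remarks: the asserted equality $e=\max\{k_2\}=\max\{\lambda_i\}$, which you do not address, also drops out of your lemma (the largest column index occurring in $\bigcup_k \Sta(m_k)$ is $\max_k k_2$, while in $D_{\lambda-\mu}$ it is $\max_i\lambda_i$); and at $j=h$ the formula $\{k_1 : k_2\ge j\}$ need not describe $Y_h$, so the link $Y_h\supseteq Y_{h+1}$ should be justified simply by $Y_h=[h]\supseteq\{k_1: k_2\ge h+1\}$ rather than by monotonicity of that formula --- a harmless imprecision.
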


\begin{Remark}
    \label{bettiStable}
    Using the notations in \Cref{obs1} \ref{obs1-c} and \Cref{compatible-stable}, if we check backwards, then among the $\nu$ minimal monomial generators of $I$, there are
    \[
    w_2:=0+0+1+2+\cdots+(h-2)+(|Y_{h+1}|-1)+\cdots+(|Y_{e}|-1)
    \]
    of them contribute $2$-cells in the induction process in the proof for \Cref{compatible-stable}. There are 
    \[
    w_1:=0+2(h-1)+(e-h)
    \]
    of them only contribute $1$-cells. The remaining initial point of course only contributes the point itself. Combining the proof for \Cref{shifted-stable-linear-quot} and \cite[Corollary 8.2.2]{HH}, we know the Betti numbers of $\widehat{I}^{[\bda]}$ satisfies the formula:
    \[
    \beta_i(\widehat{I}^{[\bda]})=w_2\binom{2}{i}+w_1\binom{1}{i}, \quad i=1,2.
    \]
    Of course, these formulae can be computed directly with ease from the graph or by our induction proof for \Cref{shifted-stable-linear-quot}.
\end{Remark}

\begin{acknowledgement*}
    The authors thank Alberto Corso for sharing his experiments which inspires this work. The authors thank Claudia Polini and Sonja Mapes for useful discussions and careful readings of the manuscript. They also want to express heartfelt thanks to the  reviewer for the suggestions that greatly improved this paper. The third author is partially supported by the ``Fundamental Research Funds for the Central Universities''.
\end{acknowledgement*}

\begin{bibdiv}
\begin{biblist}

\bib{AHH}{article}{
      author={Aramova, Annetta},
      author={Herzog, J{\"u}rgen},
      author={Hibi, Takayuki},
       title={Squarefree lexsegment ideals},
        date={1998},
        ISSN={0025-5874},
     journal={Math. Z.},
      volume={228},
       pages={353\ndash 378},
      review={\MR{1630500 (99h:13013)}},
}

\bib{BS}{article}{
      author={Bayer, Dave},
      author={Sturmfels, Bernd},
       title={Cellular resolutions of monomial modules},
        date={1998},
        ISSN={0075-4102},
     journal={J. Reine Angew. Math.},
      volume={502},
       pages={123\ndash 140},
         url={http://dx.doi.org/10.1515/crll.1998.083},
      review={\MR{1647559}},
}

\bib{BV}{article}{
	author={Budd, Samuel},
	author={Van Tuyl, Adam},
	title={Newton complementary duals of $f$-ideals},
	journal={Canad. Math. Bull.},
	volume={62},
	date={2019},
	pages={231--241},
	issn={0008-4395},
	review={\MR{3952513}},
}

\bib{CN1}{article}{
      author={Corso, Alberto},
      author={Nagel, Uwe},
       title={Specializations of {F}errers ideals},
        date={2008},
        ISSN={0925-9899},
     journal={J. Algebraic Combin.},
      volume={28},
       pages={425\ndash 437},
         url={http://dx.doi.org/10.1007/s10801-007-0111-2},
      review={\MR{2438922}},
}

\bib{CN2}{article}{
      author={Corso, Alberto},
      author={Nagel, Uwe},
       title={Monomial and toric ideals associated to {F}errers graphs},
        date={2009},
        ISSN={0002-9947},
     journal={Trans. Amer. Math. Soc.},
      volume={361},
       pages={1371\ndash 1395},
         url={http://dx.doi.org/10.1090/S0002-9947-08-04636-9},
      review={\MR{2457403 (2010b:13028)}},
}

\bib{CNPY}{article}{
author={Corso, Alberto},
author={Nagel, Uwe},
author={Petrovi\'{c}, Sonja},
author={Yuen, Cornelia},
title={Blow-up algebras, determinantal ideals, and Dedekind-Mertens-like
	formulas},
journal={Forum Math.},
volume={29},
date={2017},
pages={799--830},
issn={0933-7741},
review={\MR{3669004}},
}

\bib{CS}{article}{
      author={Costa, Barbara},
      author={Simis, Aron},
       title={New constructions of {C}remona maps},
        date={2013},
        ISSN={1073-2780},
     journal={Math. Res. Lett.},
      volume={20},
       pages={629\ndash 645},
         url={http://dx.doi.org/10.4310/MRL.2013.v20.n4.a3},
      review={\MR{3188023}},
}

\bib{DE}{article}{
      author={Dochtermann, Anton},
      author={Engstr\"om, Alexander},
       title={Cellular resolutions of cointerval ideals},
        date={2012},
        ISSN={0025-5874},
     journal={Math. Z.},
      volume={270},
       pages={145\ndash 163},
         url={http://dx.doi.org/10.1007/s00209-010-0789-z},
      review={\MR{2875826}},
}

\bib{EK}{article}{
      author={Eliahou, Shalom},
      author={Kervaire, Michel},
       title={Minimal resolutions of some monomial ideals},
        date={1990},
        ISSN={0021-8693},
     journal={J. Algebra},
      volume={129},
       pages={1\ndash 25},
      review={\MR{1037391 (91b:13019)}},
}

\bib{FMS}{article}{
      author={Francisco, Christopher~A.},
      author={Mermin, Jeffrey},
      author={Schweig, Jay},
       title={Borel generators},
        date={2011},
        ISSN={0021-8693},
     journal={J. Algebra},
      volume={332},
       pages={522\ndash 542},
      review={\MR{2774702}},
}

\bib{M2}{misc}{
      author={Grayson, Daniel~R.},
      author={Stillman, Michael~E.},
       title={Macaulay2, a software system for research in algebraic geometry},
         how={Available at \href{http://www.math.uiuc.edu/Macaulay2/}%
  {http://www.math.uiuc.edu/Macaulay2/}},
        note={Available at \href{http://www.math.uiuc.edu/Macaulay2/}%
  {http://www.math.uiuc.edu/Macaulay2/}},
}

\bib{H}{incollection}{
      author={H{\`a}, Huy~T{\`a}i},
       title={Regularity of squarefree monomial ideals},
        date={2014},
   booktitle={Connections between algebra, combinatorics, and geometry},
      editor={Cooper, Susan~M.},
      editor={Sather-Wagstaff, Sean},
      series={Springer Proceedings in Mathematics \& Statistics},
      volume={76},
   publisher={Springer New York},
       pages={251\ndash 276},
}

\bib{HV}{article}{
      author={H{\`a}, Huy~T{\`a}i},
      author={Van~Tuyl, Adam},
       title={Monomial ideals, edge ideals of hypergraphs, and their graded
  {B}etti numbers},
        date={2008},
        ISSN={0925-9899},
     journal={J. Algebraic Combin.},
      volume={27},
       pages={215\ndash 245},
      review={\MR{2375493 (2009a:05145)}},
}

\bib{HH}{book}{
      author={Herzog, J{\"u}rgen},
      author={Hibi, Takayuki},
       title={Monomial ideals},
      series={Graduate Texts in Mathematics},
   publisher={Springer-Verlag London Ltd.},
     address={London},
        date={2011},
      volume={260},
        ISBN={978-0-85729-105-9},
      review={\MR{2724673}},
}

\bib{Ho}{article}{
      author={Horwitz, Noam},
       title={Linear resolutions of quadratic monomial ideals},
        date={2007},
        ISSN={0021-8693},
     journal={J. Algebra},
      volume={318},
       pages={981\ndash 1001},
         url={http://dx.doi.org/10.1016/j.jalgebra.2007.06.006},
      review={\MR{2371982}},
}

\bib{Mer}{article}{
      author={Mermin, Jeffrey},
       title={The {E}liahou-{K}ervaire resolution is cellular},
        date={2010},
        ISSN={1939-0807},
     journal={J. Commut. Algebra},
      volume={2},
       pages={55\ndash 78},
         url={http://dx.doi.org/10.1216/JCA-2010-2-1-55},
      review={\MR{2607101}},
}

\bib{M}{article}{
      author={Miller, Ezra},
       title={The {A}lexander duality functors and local duality with monomial
  support},
        date={2000},
        ISSN={0021-8693},
     journal={J. Algebra},
      volume={231},
       pages={180\ndash 234},
      review={\MR{MR1779598 (2001k:13028)}},
}

\bib{MS}{book}{
      author={Miller, Ezra},
      author={Sturmfels, Bernd},
       title={Combinatorial commutative algebra},
      series={Graduate Texts in Mathematics},
   publisher={Springer-Verlag},
     address={New York},
        date={2005},
      volume={227},
        ISBN={0-387-22356-8},
}

\bib{MV}{incollection}{
      author={Morey, Susan},
      author={Villarreal, Rafael~H.},
       title={Edge ideals: algebraic and combinatorial properties},
        date={2012},
   booktitle={Progress in commutative algebra 1},
   publisher={de Gruyter},
     address={Berlin},
       pages={85\ndash 126},
      review={\MR{2932582}},
}

\bib{NR}{article}{
      author={Nagel, Uwe},
      author={Reiner, Victor},
       title={Betti numbers of monomial ideals and shifted skew shapes},
        date={2009},
        ISSN={1077-8926},
     journal={Electron. J. Combin.},
      volume={16},
       pages={Research Paper 3, 59},
         url={http://www.combinatorics.org/Volume_16/Abstracts/v16i2r3.html},
      review={\MR{2515766 (2010h:13022)}},
}

\bib{P}{book}{
      author={Peeva, Irena},
       title={Graded syzygies},
      series={Algebra and Applications},
   publisher={Springer-Verlag London Ltd.},
     address={London},
        date={2011},
      volume={14},
        ISBN={978-0-85729-176-9},
         url={http://dx.doi.org/10.1007/978-0-85729-177-6},
      review={\MR{2560561 (2011j:13015)}},
}

\bib{Taylor}{book}{
      author={Taylor, Diana~Kahn},
       title={Ideals generated by monomials in an {R}-sequence},
   publisher={ProQuest LLC, Ann Arbor, MI},
        date={1966},
  url={http://gateway.proquest.com/openurl?url_ver=Z39.88-2004&rft_val_fmt=info:ofi/fmt:kev:mtx:dissertation&res_dat=xri:pqdiss&rft_dat=xri:pqdiss:T-13006},
        note={Thesis (Ph.D.)--The University of Chicago},
      review={\MR{2611561}},
}

\bib{V}{article}{
      author={Villarreal, Rafael~H.},
       title={Rees algebras of edge ideals},
        date={1995},
        ISSN={0092-7872},
     journal={Comm. Algebra},
      volume={23},
       pages={3513\ndash 3524},
      review={\MR{1335312 (96e:13005)}},
}

\end{biblist}
\end{bibdiv}

\end{document}